\theoremstyle{plain}
\newtheorem{tw}{Theorem}[section]
\newtheorem {lem} [tw]{Lemma}
\newtheorem {prop}[tw] {Proposition}
\newtheorem{cor}[tw]{Corollary}
\theoremstyle{definition}
\newtheorem {rem} [tw]{Remark}
\newcommand{\cst}{\ifmmode\mathrm{C}^*\else{$\mathrm{C}^*$}\fi}
\newcommand{\M}{\mathsf{M}}
\newcommand{\h}{\mathsf{H}}
\newcommand{\hu}{\mathsf{H}_U}
\newcommand{\hap}{\mathsf{H}_{\mathbb{R}}^{ap}}
\newcommand{\hwm}{\mathsf{H}_{\mathbb{R}}^{wm}}
\newcommand{\hr}{\mathsf{H}_{\mathbb{R}}}
\newcommand{\hc}{\mathsf{H}_\mathbb{C}}
\newcommand{\kr}{\mathsf{K}_{\mathbb{R}}}
\newcommand{\fqh}{\mathcal{F}_q(\mathsf{H}_U)}
\newcommand{\hqn}{\mathsf{H}_U^{\otimes_q^n}}
\newcommand{\cq}[1]{c({#1})}
 \newcommand{\N}{\mathbb{N}}
\newcommand{\R}{\mathbb{R}}
\newcommand{\C}{\mathbb{C}}
\DeclareMathOperator{\dom}{Dom}
\newcommand{\alg} {\text{alg}}
\DeclareMathOperator{\flip}{flip}
\begin{document}

\title{Fullness of $q$-Araki-Woods factors}
\begin{abstract}
    The $q$-Araki-Woods factor associated to a group of orthogonal transformations on a real separable Hilbert space $\mathsf{H}_{\mathbb{R}}$ is full as soon as $\dim\mathsf{H}_{\mathbb{R}}\geq 2$.
\end{abstract}

\author[Manish Kumar]{Manish Kumar}  

\email{mkumar@impan.pl}
\address{Institute of Mathematics of the Polish Academy of Sciences,
	ul.~\'Sniadeckich 8, 00--656
 Warsaw, Poland}

 \author[Simeng Wang]{Simeng Wang}  

\email{simeng.wang@hit.edu.cn}
\address{Institute for Advanced Study in Mathematics, Harbin Institute of Technology,
Harbin, 150001, China}

    \subjclass[2020]{Primary: 46L36; Secondary  46L10, 46L53, 46L65}

\keywords{$q$-Araki-Woods factors; full factors; type III factors}

\maketitle

\maketitle

\section{Introduction}
\noindent
A von Neumann factor $\M$ is called {\em full} if, for any bounded net ${x_i}$ in $\M$ such that $\lim_{i}\|\varphi(\cdot x_i)-\varphi(x_i\cdot)\|_{\M_*}=0$ for all $\varphi\in \M_*$, there exists a bounded net ${\lambda_i}$ of scalars such that $\lim_i(x_i-\lambda_i)=0$ $*$-strongly \cite{Con}. Here, $\M_*$ denotes the predual of $\M$ i.e. the space of normal linear functionals on $\M$. A full factor is always {\em non-injective} or {\em non-hyperfinite} i.e. it cannot be approximated by finite-dimensional subalgebras. For {separable} type II$_1$ factors, the negation of fullness is equivalent to Murray and von Neumann's notion of Property Gamma \cite{Con2}. Connes  established several other equivalent conditions for fullness in type II$_1$ factors in his seminal work \cite{Con2} on the classification program of injective factors. Some of these equivalent conditions, such as the spectral gap property, do not seem to generalize easily to type III factors. However, in a major breakthrough recently, Marrakchi \cite{Mar} proved that fullness for {separable} type III$_1$ factors is equivalent to the assertion that the $C^*$-algebra generated by $\M$ and its commutant $\M'$, represented in the standard form, contains compact operators. The focus of this paper is to utilize one of these equivalent conditions to establish, in full generality, that the $q$-Araki-Woods factors are full.

Recall that the $q$-Araki-Woods factors, denoted as $\Gamma_q(\hr, U_t)$, are associated with a number $q\in (-1,1)$ and a strongly continuous one-parameter group of orthogonal transformations $\{U_t\}_{t\in\R}$ on a real Hilbert space $\hr$ (see the construction in Section \ref{sec:preliminaries}). These von Neumann algebras were introduced by Hiai \cite{Hiai}, combining the quasi-free deformations of Shlyakhtenko \cite{Shl} and $q$-deformations of Bo\.zejko-Speicher \cite{BS}. While various structural properties are known for the free Araki-Woods factors $\Gamma_0(\hr, U_t)$, such as fullness, solidity, strong solidity, the absence of Cartan subalgebras, and complete metric approximation properties \cite{Shl, HRau, HRic, BHV}, many of these properties remain elusive for non-zero $q$, with only a few of the relevant results available \cite{ABW, BM, BMRW, KSW, Nou, SW}. For instance, the  fundamental question of factoriality of $\Gamma_q(\hr, U_t)$ was only recently addressed in complete generality in \cite{KSW}, with some major success achieved earlier in \cite{BMRW}. Furthermore, the factors $\Gamma_q(\hr, U_t)$ are known to be non-injective \cite{KSW, Nou}. Drawing inspiration from the free/undeformed case, it is then natural to ask whether $\Gamma_q(\hr, U_t)$ is full as well for non-zero $q$. This problem has been considered earlier in \cite{Sni, HI, KSW} with   results obtained in various specific cases. In this paper, we address the remaining unresolved cases and hence present the following result in full generality.

\begin{tw}\label{thm: the main result on fullness}
 Let $q\in (-1,1)$,  let $\hr$ be a real separable Hilbert space with $\dim\hr\geq2$,  and let $\{U_t\}_{t\in \R}$ be a strongly continuous  group of orthogonal transformations on $\hr$. Then  the $q$-Araki-Woods factor $\Gamma_q(\hr, U_t)$ is full. 
\end{tw}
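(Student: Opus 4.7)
The plan is to separate cases based on the type of $\mlg := \vonT$ as a factor (factoriality being established in full generality by \cite{KSW}). When $U_t = \id$, the algebra is the tracial type II$_1$ $q$-Gaussian factor, and the absence of property Gamma is essentially the content of \cite{Sni}; the new content therefore lies in the type III case. In all such cases, the ultimate aim is to apply Marrakchi's criterion \cite{Mar} recalled in the introduction: for a separable type III$_1$ factor $\mlg$, fullness is equivalent to the $C^*$-algebra $C^*(\mlg \cup \mlg')$, in the standard representation on $\fqh$ (with cyclic separating vacuum $\Omega$), containing a nonzero compact operator. When $\mlg$ is not already type III$_1$, a preliminary reduction to that case via tensoring with a suitable full type III$_1$ partner brings us back to this criterion.

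The concrete target is then to show that the rank-one vacuum projection $p_\Omega \in \mathcal{B}(\fqh)$ belongs to $C^*(\mlg \cup \mlg')$. The natural candidates are combinations of the left $q$-creation/annihilation operators $\ce, \ceast$ (whose symmetric parts generate $\mlg$) and the right $q$-creation/annihilation operators $r(e), r^*(e)$ (whose symmetric parts generate $\mlg'$). Taking suitable sums or differences over an orthonormal basis $(e_i)$ of the relevant complexification of $\hr$, and exploiting the defining $q$-commutation relations together with the mutual commutation of left and right operators, one produces expressions that act as bounded functions of the number operator $N$ on $\fqh$. A bounded functional calculus of $N$ supported near $0$ then extracts $p_\Omega$ inside $C^*(\mlg \cup \mlg')$.

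The central obstacle is that $\ce, \ceast$ are \emph{unbounded} whenever $U_t \neq \id$, so the relevant identities hold only on a dense domain in $\fqh$ and the candidate operators are a priori only affiliated to $\mlg$ and $\mlg'$ rather than contained in them. One must work consistently with bounded functional calculi of the self-adjoint generators of $\mlg$ and $\mlg'$ (cut off at scales subsequently sent to infinity), and prove norm, not merely strong, convergence of the approximants to $p_\Omega$, with uniform control across the tensor levels of $\fqh$. This is the main technical step, requiring quantitative $q$-Wick estimates tailored to the Araki-Woods modular structure.

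This quantitative step is also where the previously inaccessible cases of \cite{Sni, HI, KSW} — most delicately small $\dim \hr$ and $U_t$ with unbounded spectrum — demand essentially new estimates rather than extensions of the existing techniques. Once norm convergence is secured and Marrakchi's criterion applied (after the preliminary reduction, if needed), fullness follows in the stated generality.
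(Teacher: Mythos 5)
Your proposal contains a genuine misconception that undermines its technical plan. You claim that ``$c(e), c^*(e)$ are \emph{unbounded} whenever $U_t\neq\id$,'' but this is false: for any $q\in(-1,1)$ and any $\xi\in\hu$, the $q$-creation operator $c(\xi)$ is a bounded operator on $\fqh$ with $\|c(\xi)\|\leq (1-|q|)^{-1/2}\|\xi\|_U$, irrespective of $U_t$. The same is true of the right operators $r(\zeta)$, $r^*(\zeta)$. The domain issue created by $U_t\neq\id$ arises only in the modular-theoretic objects (the generator $A$ is unbounded, and $W_r(\zeta)$ requires $\zeta\in\dom(A^{-1/2})$), not in the creation/annihilation operators themselves. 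So the ``central obstacle'' you describe, and the corresponding plan to work with cut-off functional calculi of self-adjoint generators, is not the real difficulty; it is aimed at a nonexistent problem.

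Beyond this, the proposal offers no concrete mechanism for producing the vacuum projection $P_\Omega$ inside $C^*(\M_q\cup\M_q')$. The paper does not sum over an orthonormal basis nor extract $P_\Omega$ via functional calculus of a number operator. Instead, the proof reduces by \cite{KSW} and \cite{HI} to the case where $A$ has a strictly decreasing sequence of eigenvalues $\lambda_k\to 0$; it then picks a single $2\times 2$ invariant block, constructs the specific eigenvectors $e,\bar e$ with $Ae=\lambda^{-1}e$, $A\bar e=\lambda\bar e$, and studies the Ces\'aro averages
\begin{equation*}
S_n=\frac1n\sum_{m=1}^n(1-q)^{2m}W(\bar e^m e^m)c(\bar e)^m c(e)^m,
\qquad
R_n=\frac1n\sum_{m=1}^n(1-q)^{2m}\|\xi\|_U^{-2m}\|\eta\|_U^{-2m}r^*(\xi)^m r^*(\eta)^m c(\xi)^m c(\eta)^m,
\end{equation*}
proving \emph{norm} convergence of $S_n$ to an invertible operator (Proposition~\ref{prop:convergence of S_n=left wick formula}) and of $R_n$ to $d_\infty^2 P_\Omega$ (Proposition~\ref{prop: convergence of r^*(xi)r^*(eta)c(xi)c(eta)}). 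The crucial ``averaging'' idea --- that the raw sequence $(1-q)^m c(\xi)^m c(\eta)^m$ does not converge in norm but its Ces\'aro average does --- is the genuine novelty and is entirely absent from your outline. Finally, the paper actually uses the ultraproduct characterization $\M_q'\cap\M_q^\omega=\C$ rather than Marrakchi's compactness criterion (the paper notes in passing that the latter could be used as well), and the argument needs $\lambda\to 0$ at the end to force $\lim_{k\to\omega}\|X_k\Omega\|_q=0$; without a concrete mechanism of that sort, the ``reduction by tensoring to type III$_1$'' plus abstract functional-calculus plan does not produce a proof.
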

We provide a concise overview of previous attempts and highlight our  contribution to the proof. When the representation $\{U_t\}_{t\in\mathbb{R}}$ is trivial and $q\neq 0$, \'Sniady \cite{Sni} proved the fullness of $\Gamma_q(\hr)$ for a large-dimensional real Hilbert space $\hr$, dependent on the value of $q$ (including the case when $\dim\hr=\infty$); additionally, Miyagawa-Speicher \cite{MS} established the fullness of $\Gamma_q(\hr)$ for a finite-dimensional Hilbert space $\hr$ with $\dim\hr\geq 2$. On the other hand, there are mainly  two  papers, \cite{KSW} and \cite{HI}, which have dealt with the fullness of $\Gamma_q(\hr, U_t)$ for $q\neq 0$ and $\{U_t\}_{t\in\mathbb{R}}$ non-trivial. In order to describe them, it is crucial to first mention that the Hilbert space $\hr$ can be decomposed into direct sums of {\em almost periodic} $\hap$ and {\em weakly mixing} $\hwm$ subspaces, where $\hap$ is the direct sum of all finite-dimensional invariant subspaces of $\{U_t\}_{t\in\mathbb{R}}$, and $\hwm$ encompasses the rest. When $\hr$ is finite-dimensional with $\dim\hr\geq2$, $\Gamma_q(\hr, U_t)$ is shown to be full in \cite{KSW}. The proof involves establishing the non-injectivity of $\Gamma_q(\hr, U_t)$ by combining results on conjugate variables and  {general} findings of Nelson \cite{Nel}. It then proceeds to demonstrate Ozawa's AO property for $\Gamma_q(\hr, U_t)$ before invoking the {general} results of Houdayer-Raum \cite{HRau} to conclude fullness. In contrast, Houdayer and Isono \cite{HI} show that if $\hwm\neq0$ or if the point spectrum, the set of  of eigenvalues of $A$, is infinite and bounded, then $\Gamma_q(\hr, U_t)$ is full. Here, $A$ denotes the analytic generator of the one-parameter group of unitaries $\{U_t\}_{t\in\mathbb{R}}$, extended linearly to the complexification of $\hr$. Their proof follows the ideas of \cite{Sni} from the tracial case, i.e. when $\{U_t\}_{t\in\mathbb{R}}$ is trivial. However, their approach encounters difficulties when there is a sequence of eigenvalues growing sufficiently fast to infinity. In this paper, we overcome such challenges by adopting a  different approach.

Our arguments involve a meticulous evaluation of certain Wick products in the centralizer of $\Gamma_q(\hr, U_t)$ associated with vectors in a $2\times2$ block of $\hr$. As in  \cite{BMRW}, we make a crucial use of the non-triviality of the group $\{U_t\}_{t\in\mathbb{R}}$; however, it is worth noting that the methods in \cite{BMRW}  only work for the \textit{weak} convergence of the concerning sequences, which does not apply to the study of fullness. In our approach, we establish several {\em norm} convergence results involving the Wick products and creation/annihilation operators. The key novelty in these convergence results lies in the utilization of an `averaging technique', where one will observe that the sequences themselves may fail to converge. For instance, when $\xi$ and $\eta$ are two unit vectors in $\hr$, the bounded sequence $T_n=(1-q)^nc(\xi)^nc(\eta)^n$ never converges in norm, although it does converge to $0$ in weak operator topology, where $c(\xi)$ denotes the creation operator on the (deformed) Fock space. Nonetheless, we demonstrate that the Ces\'aro sum of $T_n$ converges to $0$ in norm with the additional assumption that $\xi,\eta$ are orthogonal to each other (see Proposition \ref{prop:limit of cesaro sum of C(xi)^kc(eta)^kT_k}). The main idea then is to carefully select such vectors $\xi$ and $\eta$, and employ these convergence results to prove fullness (see Theorem \ref{thm: the main result on fullness}). It is crucial to emphasize that the methods employed in this paper are not applicable to the cases considered in the previous two attempts \cite{KSW, HI}. Thus, a combination of both \cite{KSW} and \cite{HI} is necessary to conclusively establish Theorem \ref{thm: the main result on fullness} in full generality.

This paper is organized as follows. We give a brief overview of the construction of $q$-Araki-Woods factors in Section \ref{sec:preliminaries}. Section \ref{sec: technical analysis} is the core of the article where we carry our main computations on various convergence results. Finally, we put all our efforts together in Section \ref{sec:main result} in proving the main result as mentioned above.

\section{Preliminaries on $q$-Araki-Woods factors}\label{sec:preliminaries}
We briefly describe  construction of the $q$-Araki-Woods  factors and refer the readers to \cite{Hiai} for a detailed treatment. As a convention, all Hilbert spaces are separable and inner products are linear in the second variable. For any real or complex Hilbert space $H$, ${B}(H)$ denotes the algebra of bounded linear operators on $H$.

 Let $\hr$ be a real Hilbert space, and let $\{U_t\}_{t\in\mathbb{R}}$ be a  strongly continuous orthogonal representation of $\R$ on $\hr$. Consider the complexification $ \hc= \hr\otimes_\R\C$ of $\hr$, whose inner product is denoted by $\langle\cdot,\cdot\rangle_{\hc}$.  
 Identify $\hr$ inside $ \hc$ by $\hr\otimes 1$, so that $  \hc=\hr+i\hr$.
Given $\xi=\xi_1+i\xi_2\in  \hc$ for some $\xi_1,\xi_2\in\hr$, write the canonical involution $\bar{\xi}$ as
\begin{equation*}
    \bar{\xi}=\xi_1-i\xi_2.
\end{equation*}
We  extend $\{U_t\}_{t\in\mathbb{R}}$ to a strongly continuous one parameter group of unitaries on the  complexification $ \hc$, which is still denoted by $\{U_t\}_{t\in\mathbb{R}}$. Let $A$ denote the positive, self-adjoint, and non-singular  (unbounded) operator on $\hc$ obtained via Stone's theorem, which satisfies $U_t=A^{it}$ for all $t\in\R$. Note that $\overline{A\xi}=A^{-1}\bar{\xi}$ for any $\xi$ in the domain of $A$. In particular, $\sigma(A)\cap (0,\infty)$ is symmetric around $1$ i.e. for $\lambda\neq 0$, $\lambda\in\sigma(A)$ if and only if $1/\lambda\in\sigma(A)$, where $\sigma(A)$ is the spectrum of $A$.

Following Shlyakhtenko \cite{Shl},  consider the deformed inner product $\langle\cdot,\cdot\rangle_U$ on $\hc$ defined by  
\begin{equation*}
    \langle\xi,\zeta\rangle_U=\langle 2(1+A^{-1})^{-1}\xi, \zeta\rangle\quad\forall\xi,\zeta\in \hc.
\end{equation*}
We denote the completion of $\hc$ with respect to $\langle\cdot,\cdot\rangle_U$  by $\hu$. The space $(\hr,\|\cdot\|_{ \hr})$ is naturally identified as a subspace of $\hu$ such that $\hr+i\hr$ is dense in $\hu$ and $\hr\cap i\hr=\{0\}$.
For $q\in (-1,1)$, consider the {\em $q$-Fock space} $\fqh$ given by 
\begin{equation*}
    \fqh=\C\Omega\oplus\bigoplus_{n\geq 1}\hqn,
\end{equation*}
where $\Omega$ is a distinguished unit vector in $\C$ (called {\em vacuum vector}) and $\hqn$ is the completion of the algebraic tensor product $\h^{\otimes_\alg^n}$ equipped with the following inner product:
\begin{equation*}
    \langle \xi_1\otimes\ldots\otimes\xi_n,\zeta_1\otimes\ldots\otimes\zeta_n\rangle_q=\sum_{\pi\in S_n}q^{|\pi|}\prod_{i=1}^n\langle\xi_i,\zeta_{\pi(i)}\rangle_U
\end{equation*}
for all $\xi_i,\zeta_i\in\hu$, $1\leq i\leq n$. Here $S_n$ denotes the symmetric group on $n$ elements and $|\pi|$ denotes the number of inversions for the permutation $\pi\in S_n$. That the above assignment yields a genuine inner  product was proved in  \cite{BS}. 
On  $\fqh$, we define for each $\xi\in \hu$  operators $c(\xi)$ (called {\em left q-creation}) and  $c^*(\xi)$ (called {\em left q-annihilation}) by the following assignments:
\begin{align*}
   &c(\xi)\Omega=\xi,\\
   &c(\xi)(\xi_1\otimes\ldots\otimes\xi_n)=\xi\otimes\xi_1\otimes\ldots\otimes\xi_n,\;\;\;\mbox{ and }\\ 
 &c^*(\xi)\Omega=0,\\
 &c^*(\xi)(\xi_1\otimes\ldots\otimes\xi_n)=\sum_{i=1}^nq^{i-1}\langle\xi,\xi_i\rangle\xi_1\otimes\ldots\otimes\hat{\xi_i}\otimes\ldots\otimes\xi_n, 
\end{align*}
for all $\xi_1\otimes\ldots\otimes\xi_n\in\hqn$. Here $\hat{\xi_i}$ notates the omitted $i$-th letter from the tensor. That $c(\xi)$ and $c^*(\xi)$ extend to bounded linear operators on $\fqh$  being adjoint to each other is a well known fact (see \cite{BS}).
In a similar fashion, one defines the {\em right $q$-creation} and {\em right $q$-annihilation} operators $r(\xi)$ and $r^*(\xi)$ respectively for all $\xi\in \hu$ as follows:
\begin{align*}
   &r(\xi)\Omega=\xi,\\
   &r(\xi)(\xi_1\otimes\ldots\otimes\xi_n)=\xi_1\otimes\ldots\otimes\xi_n\otimes\xi,\;\;\;\mbox{ and }\\ 
 &r^*(\xi)\Omega=0,\\
 &r^*(\xi)(\xi_1\otimes\ldots\otimes\xi_n)=\sum_{i=1}^nq^{n-i}\langle\xi,\xi_i\rangle\xi_1\otimes\ldots\otimes\hat{\xi_i}\otimes\ldots\otimes\xi_n,
\end{align*}
for all $\xi_1\otimes\ldots\otimes\xi_n\in\hqn$. Again $r(\xi)$ and $r^*(\xi)$ are bounded operators which are adjoint to each other. The creation and annihilation operators satisfy the following $q$-commutation relations:
\begin{equation}\label{eq:q-commutation relations}
\begin{aligned}
&c^*(\xi)c(\eta)-qc(\eta)c^*(\xi)=\langle\xi,\eta\rangle_U\\
&r^*(\xi)r(\eta)-qr(\eta)r^*(\xi)=\langle\xi,\eta\rangle_U
\end{aligned}
\end{equation}
for any $\xi,\eta\in \hu$. We shall repeatedly use the above commutation relations without referring to them. In particular, the most useful use of these relations will be the following consequence:
\begin{align*}
c^*(\xi)^mc(\eta)^n=q^{mn}c(\eta)^nc^*(\xi)^m
\end{align*}
whenever $\xi,\eta\in\hu$ with $\langle\xi,\eta\rangle_U=0$ and $m,n\geq 1$.

We now consider the  von Neumann algebra introduced by Hiai \cite{Hiai} (following Shlyakhtenko \cite{Shl} and Bo\.zejko-Speicher \cite{BS}), denoted $\Gamma_q(\hr, U_t)$, to be the von Neumann subalgebra of ${B}(\mathcal{F}_q(\hu))$ generated by the set $\{W(\xi); \xi\in\hr\}$, where 
\begin{equation*}\label{eq:definition of sq(xi)}
    W(\xi)= \cq\xi+c^*(\xi),\;\;\;\;\;\;\;\xi\in\hr.
\end{equation*}
The algebra $\Gamma_q(\hr, U_t)$ is  called the {\em $q$-deformed Araki-Woods von Neumann algebra}. If the representation $\{U_t\}_{t\in\mathbb{R}}$ is clear from the context, we shall denote the algebra $\Gamma_q(\hr, U_t)$ simply by $\M_q$. We shall always assume that $\dim\hr\geq 2$. Since it is now known that $\M_q$ is a {\em factor} i.e. $\M_q$ has trivial center (see \cite{KSW, Hiai}), we will call it  a {\em  $q$-Araki-Woods factor.}

The vacuum vector $\Omega$ is  cyclic and separating  for the factor $\M_q$, and hence induces a faithful normal state: $\varphi(\cdot)=\langle \Omega, \cdot\;\Omega\rangle_q$ on $\M_q$, called the {\em $q$-quasi free state}. It is immediate to verify  that the GNS Hilbert space associated to $\varphi$ is $\fqh$.
Since $\Omega$ is a separating vector for both $\M_q$ and $\M_q'$, there exist for each $\xi\in \M_q\Omega$ and $\zeta\in \M_q'\Omega$  unique operators $x_\xi\in \M_q$ and $x'_\zeta\in \M_q'$ such that $\xi=x_\xi\Omega$ and $\zeta=x_\zeta'\Omega$. Write
\begin{equation*}
    W(\xi)=x_\xi, \;\mbox{ and }\;W_r(\zeta)=x'_\zeta.
\end{equation*}
Note that the notation $W(\xi)$ is in line with  the one introduced above for $\xi\in\hr$. The operators $W(\xi)$ and $W_r(\zeta)$ are respectively called {\em left} and {\em right Wick operators}. It is known that $\xi_1\otimes\ldots\otimes\xi_n\in \M_q\Omega$ for all $\xi_i\in \h_\C, 1\leq i\leq n$ and $\zeta_1\otimes\ldots\otimes\zeta_m\in \M_q'\Omega$ for all $\zeta_j\in  \h_\C\cap \dom(A^{-1/2}), 1\leq j\leq m$. By definition, the operators ${W}(\xi)$ and $W_r(\zeta)$ commute for all $\xi\in \M_q\Omega$ and $\zeta\in \M_q'\Omega$.  In fact, the commutant of $\M_q$ in  $B(\fqh)$ is given by 
\begin{equation*}
\Gamma_q(\hr, U_t)'= \{W_r(\eta);\eta\in\hr'\}^{''}
\end{equation*}
where $\hr'=\{\eta\in\h_\C; \langle \xi,\eta\rangle_U\in\R \mbox{ for all }\xi\in\hr\}$. We have $\hc=\hr'+i\hr'$ and $\hr\cap i\hr=\{0\}$.  It is easy to verify that
\begin{equation*}
    W(\xi)=\cq\xi+c^*({\bar{\xi}})\;\;\mbox{ and }\;\;W_r(\zeta)=r(\zeta)+r^*(\bar{\zeta}^r)
\end{equation*}
for any $\xi\in  \hc$ and $\zeta\in \hc\cap \dom(A^{-1/2})$, where $\bar{\zeta}^r $ denotes the real part of $\zeta$ with respect to the decomposition $\hc=\hr'+i\hr'$.
The formula for the left and right Wick operators are given as follows:

\begin{prop}\label{prop:formula for Wick operators}
 For any vectors $\xi_1,\ldots,\xi_n\in \hr+i\hr$,     the Wick-formula for operators in $\M_q$ is given by
    \begin{align*}        W(\xi_1\otimes\cdots\otimes\xi_n)&=\sum_{i=0}^n\sum_{\sigma\in S_{n, i}}q^{|\sigma|}c(\xi_{\sigma(1)})\cdots c(\xi_{\sigma(i)})c^*(\bar{\xi}_{\sigma(i+1)})\cdots c^*(\bar{\xi}_{\sigma(n)})
    \end{align*}
    where $S_{n,j}$ is the set of those permutations in $S_n$ which are increasing on ${\{1,\ldots,j\}}$ and on $\{j+1,\ldots,n\}$.
\end{prop}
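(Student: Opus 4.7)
The natural approach is induction on $n$, based on a recursion relating the Wick operator for $n$ vectors to those for fewer vectors. The base case $n=1$ is exactly the definition $W(\xi_1)=c(\xi_1)+c^*(\bar\xi_1)$ recorded in the excerpt. For the inductive step, denote the right-hand side of the formula by $\widetilde W(\xi_1\otimes\cdots\otimes\xi_n)$; it is a finite sum of products of bounded operators, hence bounded on $\FockqH$. We need to show that this operator lies in $\M_q$ and sends $\Omega$ to $\xi_1\otimes\cdots\otimes\xi_n$. Since $\Omega$ is separating for $\M_q$ and $\xi_1\otimes\cdots\otimes\xi_n$ belongs to $\M_q\Omega$ (as recalled in the excerpt), this will force $\widetilde W(\xi_1\otimes\cdots\otimes\xi_n)=W(\xi_1\otimes\cdots\otimes\xi_n)$.

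The action on $\Omega$ is immediate: since $c^*(\bar\xi)\Omega=0$, only the single summand with $i=n$ and identity permutation survives, yielding $c(\xi_1)\cdots c(\xi_n)\Omega=\xi_1\otimes\cdots\otimes\xi_n$. The substantial step is to establish the recursion
\begin{equation*}
\widetilde W(\xi_1\otimes\cdots\otimes\xi_n)=W(\xi_1)\,\widetilde W(\xi_2\otimes\cdots\otimes\xi_n)-\sum_{k=2}^n q^{k-2}\langle\bar\xi_1,\xi_k\rangle_U\,\widetilde W(\xi_2\otimes\cdots\widehat{\xi_k}\cdots\otimes\xi_n),
\end{equation*}
from which membership in $\M_q$ follows by the inductive hypothesis. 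To prove the identity, split the sum defining $\widetilde W(\xi_1\otimes\cdots\otimes\xi_n)$ according to the position of the label $1$ in the permutation $\sigma\in S_{n,i}$: since $\sigma$ is increasing on each of the two blocks, $1$ must sit either at position $1$ (top of the creation block) or at position $i+1$ (top of the annihilation block). The first family factors as $c(\xi_1)$ times the formula for $\xi_2,\ldots,\xi_n$, because placing the minimum element at position $1$ contributes no inversions. For the second family, $c^*(\bar\xi_1)$ sits between the two blocks; pulling it leftward past the $i$ preceding creation operators via the $q$-commutation relations contributes a factor $q^i$ (matching the $i$ inversions created by placing $1$ at position $i+1$) together with $i$ contraction terms of the form $q^{k-1}\langle\bar\xi_1,\xi_{\sigma(k)}\rangle_U$ multiplied by a product of $i-1$ creations and $n-1-i$ annihilations.

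Combining, the non-contraction contributions add up to $(c(\xi_1)+c^*(\bar\xi_1))\widetilde W(\xi_2\otimes\cdots\otimes\xi_n)=W(\xi_1)\widetilde W(\xi_2\otimes\cdots\otimes\xi_n)$. The main obstacle is the combinatorial bookkeeping in reorganising the contraction terms as $\sum_{k=2}^n q^{k-2}\langle\bar\xi_1,\xi_k\rangle_U\widetilde W(\xi_2\otimes\cdots\widehat{\xi_k}\cdots\otimes\xi_n)$: for each removed index $k$, one must match permutations $\sigma\in S_{n-1,i}$ on $\{2,\ldots,n\}$ with $\sigma(k')=k$ for some $k'\le i$ against permutations $\rho$ on $\{2,\ldots,n\}\setminus\{k\}$, carefully relating $|\sigma|$ to $|\rho|$ so that the intermediate $q$-exponents telescope to $q^{k-2}\cdot q^{|\rho|}$. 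Once this combinatorial identity is verified, the induction closes and the proposition follows.
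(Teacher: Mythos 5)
The paper states this proposition without proof; it is a standard formula in the $q$-deformed setting (see Bo\.zejko--Speicher and Hiai), so there is no in-paper argument to compare against. Your recursive induction is the standard route and the argument is sound. One small simplification: once you have the recursion for $\widetilde W$, you do not actually need the separating-vector step, because $W$ itself satisfies the identical recursion. Indeed, applying $W(\xi_1)=c(\xi_1)+c^*(\bar\xi_1)$ to $W(\xi_2\otimes\cdots\otimes\xi_n)\Omega=\xi_2\otimes\cdots\otimes\xi_n$ and using $c^*(\bar\xi_1)(\xi_2\otimes\cdots\otimes\xi_n)=\sum_{k=2}^n q^{k-2}\langle\bar\xi_1,\xi_k\rangle_U\,\xi_2\otimes\cdots\widehat{\xi_k}\cdots\otimes\xi_n$ gives $W(\xi_1\otimes\cdots\otimes\xi_n)=W(\xi_1)W(\xi_2\otimes\cdots\otimes\xi_n)-\sum_{k=2}^n q^{k-2}\langle\bar\xi_1,\xi_k\rangle_U W(\xi_2\otimes\cdots\widehat{\xi_k}\cdots\otimes\xi_n)$ (again by separability, or simply because both sides lie in $\M_q$ and agree on $\Omega$); comparing recursions then closes the induction immediately.

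On the deferred combinatorial step, it does work exactly as you anticipate, and it is worth spelling out since you call it the main obstacle. Fix $k\in\{2,\ldots,n\}$, a shuffle $\sigma\in S_{n-1,i}$ acting on $\{2,\ldots,n\}$ with $\sigma(j)=k$ for some $j\leq i$, and let $\rho\in S_{n-2,i-1}$ on $\{2,\ldots,n\}\setminus\{k\}$ be obtained by deleting position $j$. The inversions of $\sigma$ lost upon deletion are exactly the pairs $(j,a)$ with $a>i$ and $\sigma(a)<k$; there are none with $a\leq i$ because the first block is increasing. Of the $k-2$ elements of $\{2,\ldots,k-1\}$, exactly $j-1$ lie in the first block (again by monotonicity and $\sigma(j)=k$), so $k-j-1$ lie in the second, giving $|\sigma|-|\rho|=k-j-1$ and hence $q^{|\sigma|}\,q^{j-1}=q^{k-2}\,q^{|\rho|}$. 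Since for fixed $k$ the assignment $(\sigma,j)\mapsto\rho$ is a bijection (recover $\sigma$ by inserting $k$ into the first block of $\rho$ at its unique monotone slot), the contraction terms reorganize precisely to $\sum_{k=2}^n q^{k-2}\langle\bar\xi_1,\xi_k\rangle_U\,\widetilde W(\xi_2\otimes\cdots\widehat{\xi_k}\cdots\otimes\xi_n)$, as required.
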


\begin{prop}\label{prop:formula for right Wick product}
 For any vectors $\xi_1,\ldots,\xi_n\in \hr'+i\hr'$,     the Wick-formula for operators in $\M_q'$ is given by
    \begin{align*}        W_r(\xi_n\otimes\cdots\otimes\xi_1)&=\sum_{i=0}^n\sum_{\sigma\in S_{n, i}}q^{|\flip\circ\sigma|}r(\xi_{\sigma(1)})\cdots r(\xi_{\sigma(n-i)})r^*(\bar{\xi}_{\sigma(n-i+1)}^r)\cdots r^*(\bar{\xi}_{\sigma(n)}^r)\\
   & =\sum_{i=0}^n\sum_{\sigma\in S_{n, n-i}}q^{|\sigma|}r(\xi_{n+1-\sigma(n)})\cdots r(\xi_{n+1-\sigma(i+1)})r^*(\bar{\xi}_{n+1-\sigma(i)}^r)\cdots r^*(\bar{\xi}_{n+1-\sigma(1)}^r)
    \end{align*}
    where $S_{n,j}$ is the set of those permutations in $S_n$ which are increasing on ${\{1,\ldots,j\}}$ and on $\{j+1,\ldots,n\}$.
\end{prop}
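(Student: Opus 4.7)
My plan is to verify the identity by establishing that the right-hand side satisfies the two defining properties of $W_r(\xi_n \otimes \cdots \otimes \xi_1)$: namely, that it (a) sends $\Omega$ to $\xi_n \otimes \cdots \otimes \xi_1$, and (b) belongs to the commutant $\M_q'$. Since $\Omega$ is separating for $\M_q'$, these two conditions uniquely determine the operator.

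Step (a) is a direct check: every summand containing at least one right annihilation operator annihilates $\Omega$, so only the pure-creation term contributes. From the second displayed form this term is $r(\xi_1) r(\xi_2) \cdots r(\xi_n) \Omega$ with coefficient $q^{0} = 1$, which by iterated right-creation produces exactly $\xi_n \otimes \cdots \otimes \xi_1$.

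The substantive step is (b), which I would carry out by induction on $n$. The base case $n = 1$ reduces to the already-established identity $W_r(\xi_1) = r(\xi_1) + r^*(\bar{\xi}_1^r)$. For the inductive step, I would first derive the recursion
\begin{equation*}
W_r(\xi_n \otimes \cdots \otimes \xi_1) = W_r(\xi_1)\, W_r(\xi_n \otimes \cdots \otimes \xi_2) - \sum_{k=2}^n q^{k-2} \langle \bar{\xi}_1^r, \xi_k\rangle_U \, W_r(\xi_n \otimes \cdots \otimes \hat{\xi_k} \otimes \cdots \otimes \xi_2)
\end{equation*}
by applying both sides to $\Omega$, unwinding $W_r(\xi_1) = r(\xi_1) + r^*(\bar{\xi}_1^r)$, and using the separating property of $\Omega$ for $\M_q'$. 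Substituting the inductive hypothesis for $W_r(\xi_n \otimes \cdots \otimes \xi_2)$ into the right-hand side of the recursion, I would then expand the product $W_r(\xi_1) \cdot W_r(\xi_n \otimes \cdots \otimes \xi_2)$ using the $q$-commutation relations for $r, r^*$ and check that the resulting expression collapses to the claimed Wick formula at level $n$. The main obstacle I anticipate is the combinatorial bookkeeping of permutations in $S_{n,i}$ and their inversion counts: one must verify that the new creation/annihilation produced by $W_r(\xi_1)$, together with the correction sum involving the inner products $\langle \bar{\xi}_1^r, \xi_k\rangle_U$, fits together to reconstruct exactly the indexing structure $S_{n,n-i}$ and the coefficients $q^{|\sigma|}$ appearing in the second form of the formula.

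Finally, the equivalence of the two displayed forms is a combinatorial identity on the summation variables, which I would verify through an appropriate bijection of permutations in $S_n$ together with the re-indexing $\xi_k \mapsto \xi_{n+1-k}$ on the vectors, matching the exponent $|\flip\circ\sigma|$ in the first form with $|\sigma|$ in the second.
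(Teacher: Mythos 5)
The paper states this Proposition as background, analogous to Proposition \ref{prop:formula for Wick operators}, and gives no proof of it; so there is no internal argument to compare against. Your strategy is the standard one for Wick formulas and its skeleton is correct: $W_r(\zeta)$ is by definition the unique element of $\M_q'$ with $W_r(\zeta)\Omega = \zeta$, so it suffices to verify (a) and (b) for the right-hand side. Step (a) is fine, and the recursion you write is also correct --- applying both sides to $\Omega$ and using
\begin{align*}
r^*(\bar{\xi}_1^r)(\xi_n\otimes\cdots\otimes\xi_2) = \sum_{k=2}^n q^{k-2}\langle\bar{\xi}_1^r,\xi_k\rangle_U\,\xi_n\otimes\cdots\otimes\hat{\xi_k}\otimes\cdots\otimes\xi_2
\end{align*}
does indeed yield $\xi_n\otimes\cdots\otimes\xi_1$ on the left, and both sides lie in $\M_q'$ by the inductive hypothesis, so the separating property of $\Omega$ pins down the identity.

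The gap is that the inductive step you describe as ``the main obstacle'' is the \emph{entire} content of the statement, and you leave it unexecuted. When you multiply $W_r(\xi_1)=r(\xi_1)+r^*(\bar{\xi}_1^r)$ into the $(n-1)$-level Wick expansion, the $r^*(\bar{\xi}_1^r)$ factor must be pushed past a variable-length string of right-creations via $r^*(\xi)r(\eta)=qr(\eta)r^*(\xi)+\langle\xi,\eta\rangle_U$. One then has to show simultaneously that the resulting $\langle\bar{\xi}_1^r,\cdot\rangle_U$ cross terms cancel your correction sum \emph{exactly} (not just up to a reshuffle), and that the $q$-powers picked up by commutation combine with the inversion counts of the level-$(n-1)$ permutations to produce precisely $q^{|\sigma|}$ over $S_{n,n-i}$ after the relabelling $\xi_j\mapsto\xi_{n+1-\sigma(\cdot)}$. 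This is doable, but a plan is not a proof; as written, you have not verified that the claimed coefficients come out right. There is also a cleaner route that bypasses the induction entirely: conjugate Proposition \ref{prop:formula for Wick operators} by the modular conjugation $J_\varphi$. Since $J_\varphi \M_q J_\varphi=\M_q'$, $J_\varphi\Omega=\Omega$, and $J_\varphi$ acts concretely on $\fqh$ (tensor reversal composed with an antilinear involution of $\hu$), one reads off the right Wick formula directly once one records what $J_\varphi c(\cdot)J_\varphi$ and $J_\varphi c^*(\cdot)J_\varphi$ are. The paper itself implicitly uses this relation, via $W_r(\bar{e}^n e^n)=J_\varphi W(\bar{e}^n e^n)J_\varphi$, in the proof of Lemma \ref{lem:boundedness of sequence S_n and R_n}.
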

We adopt the following notations for our convenience.
\begin{enumerate}
    \item $\xi_1\ldots\xi_n=\xi_1\otimes\ldots\otimes\xi_n$, for $\xi_i\in\hu$, $1\leq i\leq n$.
    \item $\xi^n=\xi^{\otimes n}$, $n\geq 1$ and $\xi^0=\Omega$ for all $\xi\in\hu$. 
    \item $d_0=1$, $d_n=\prod_{j=1}^n(1-q^j)$ for $n\geq 1$ and $d_\infty=\prod_{j=1}^\infty(1-q^j)$
    \item $C_q=\prod_{j=1}^\infty\frac{1}{1-|q|^j}$
    \item $[n]_q=\sum_{j=0}^{n-1}q^j=\frac{1-q^n}{1-q}$
    \item $[n]_q!=\prod_{j=1}^n[j]_q=\frac{d_n}{(1-q)^n}$
    \item $\begin{pmatrix}
        n\\k
    \end{pmatrix}_q=\frac{[n]_q!}{[k]_q![n-k]_q!}$ for any $1\leq k\leq n$.
\end{enumerate}

The following are well known inequalities (see \cite{BS}), which we shall use very frequently throughout the paper:
\begin{enumerate}
\item $d_n\leq C_q$,
\item $\frac{1}{d_n}\leq C_q$.
\item $\|c(\xi)^n\|\leq \sqrt{C_q}\|\xi^n\|_q$ for all $\xi\in\hu$.
\end{enumerate}

The following statements are either well-known or easy to verify.

\begin{lem}\label{lem:formula for c*}
Given $f\in \hu$, and $\xi\in\hu^{\otimes_q^m}, \eta\in \hu^{\otimes_q^n}$ for some $m,n\geq 1$, we have
\begin{align*}
    c^*(f)(\xi\otimes\eta)=(c^*(f)\xi)\otimes\eta+q^m\xi\otimes c^*(f)\eta
\end{align*}
\end{lem}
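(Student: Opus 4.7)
The strategy is to reduce to simple tensors and then unwind the definition of $c^*(f)$, grouping the $m+n$ terms according to whether the omitted letter lies in the first $m$ or last $n$ positions.

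First, by sesquilinearity of $c^*(f)$ and continuity, it suffices to verify the identity when $\xi=\xi_1\otimes\cdots\otimes\xi_m$ and $\eta=\eta_1\otimes\cdots\otimes\eta_n$ are elementary tensors (the general case then follows by density of algebraic tensors in $\hu^{\otimes_q^m}$ and $\hu^{\otimes_q^n}$ and by boundedness of $c^*(f)$). Set $\zeta_i=\xi_i$ for $1\leq i\leq m$ and $\zeta_{m+j}=\eta_j$ for $1\leq j\leq n$, so that $\xi\otimes\eta=\zeta_1\otimes\cdots\otimes\zeta_{m+n}\in\hu^{\otimes_q^{m+n}}$.

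Next, applying the definition of the left $q$-annihilation operator on $\hu^{\otimes_q^{m+n}}$,
\begin{align*}
c^*(f)(\xi\otimes\eta)
&=\sum_{k=1}^{m+n}q^{k-1}\langle f,\zeta_k\rangle\,\zeta_1\otimes\cdots\otimes\hat{\zeta_k}\otimes\cdots\otimes\zeta_{m+n}.
\end{align*}
I would split this sum at $k=m$. For $1\leq k\leq m$, the omitted letter sits in the first block, so the $k$-th term equals $\bigl(q^{k-1}\langle f,\xi_k\rangle\,\xi_1\otimes\cdots\otimes\hat{\xi_k}\otimes\cdots\otimes\xi_m\bigr)\otimes\eta$, and summing over $k=1,\ldots,m$ produces exactly $(c^*(f)\xi)\otimes\eta$ by definition of $c^*(f)$ on $\hu^{\otimes_q^m}$. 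For $k=m+j$ with $1\leq j\leq n$, the omitted letter sits in the second block and the corresponding term is
\begin{align*}
q^{m+j-1}\langle f,\eta_j\rangle\,\xi_1\otimes\cdots\otimes\xi_m\otimes\eta_1\otimes\cdots\otimes\hat{\eta_j}\otimes\cdots\otimes\eta_n
=q^m\,\xi\otimes\bigl(q^{j-1}\langle f,\eta_j\rangle\,\eta_1\otimes\cdots\otimes\hat{\eta_j}\otimes\cdots\otimes\eta_n\bigr),
\end{align*}
so summing over $j=1,\ldots,n$ yields $q^m\,\xi\otimes c^*(f)\eta$. Adding the two pieces gives the claim.

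There is no real obstacle here; the only thing to keep track of is the shift in the exponent of $q$: indices $k=m+j$ in the combined tensor correspond to indices $j$ in $\eta$, which introduces precisely the factor $q^m$ that appears in the statement. The argument is purely combinatorial on simple tensors, and the extension to arbitrary $\xi,\eta$ is a routine continuity/linearity step that I would mention in one line.
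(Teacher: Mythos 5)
Your proof is correct, and it is exactly the computation one would write down: split the defining sum for $c^*(f)$ on $\hu^{\otimes_q^{m+n}}$ at the $m$-th position, note that terms $k\leq m$ reassemble to $(c^*(f)\xi)\otimes\eta$, and terms $k=m+j$ carry the extra factor $q^m$ that rebuilds $q^m\,\xi\otimes c^*(f)\eta$. The paper states this lemma without proof (listing it among results that are "either well-known or easy to verify"), so there is no argument to compare against, but your reduction to elementary tensors followed by the index-shift bookkeeping is the standard and intended verification.
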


\begin{lem}\label{lem:upper bound for c(xi^n)}
For any $\xi\in\hu$ and $n\geq1$, we have $\|c(\xi)^n\|\leq  C_q(1-q)^{-\frac{n}{2}}\|\xi\|^\frac{n}{2}.$
\end{lem}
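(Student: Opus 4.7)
The plan is to chain together the two inequalities already tabulated just above the statement: the bound $\|c(\xi)^n\|\leq \sqrt{C_q}\,\|\xi^n\|_q$ and the estimate $d_n\leq C_q$. The only substantive step in between is evaluating the $q$-deformed Fock-norm $\|\xi^n\|_q$ in closed form in terms of $\|\xi\|_U$.

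First I would expand $\|\xi^n\|_q^2=\langle \xi^{\otimes n},\xi^{\otimes n}\rangle_q$ using the defining formula for the inner product on $\hqn$. Because every tensor slot is the same vector $\xi$, each term indexed by a permutation $\pi\in S_n$ collapses to $q^{|\pi|}\|\xi\|_U^{2n}$, and the classical generating-function identity $\sum_{\pi\in S_n}q^{|\pi|}=[n]_q!$ then yields $\|\xi^n\|_q^2 = [n]_q!\,\|\xi\|_U^{2n}$. Using the relation $[n]_q! = d_n/(1-q)^n$ from the notation list, this rewrites as $\|\xi^n\|_q = \sqrt{d_n}\,(1-q)^{-n/2}\,\|\xi\|_U^{n}$.

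Plugging this into $\|c(\xi)^n\|\leq \sqrt{C_q}\,\|\xi^n\|_q$ and using $\sqrt{d_n}\leq \sqrt{C_q}$ gives the clean estimate $\|c(\xi)^n\|\leq C_q(1-q)^{-n/2}\|\xi\|_U^{n}$, which is the advertised bound (the printed exponent $n/2$ on $\|\xi\|$ should be read as $n$, as the natural homogeneity of $c(\xi)^n$ in $\xi$ dictates). Since every ingredient is simply quoted from the list of standard inequalities immediately preceding the statement, there is no genuine obstacle: the proof is purely bookkeeping, and the only conceptual content lies in evaluating the sum $\sum_{\pi\in S_n}q^{|\pi|}$.
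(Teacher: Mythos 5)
Your proof is correct and takes essentially the same approach as the paper: both reduce to the quoted bound $\|c(\xi)^n\|\leq\sqrt{C_q}\,\|\xi^n\|_q$ together with the identity $[n]_q!=d_n(1-q)^{-n}\leq C_q(1-q)^{-n}$, the only difference being that you spell out the intermediate computation $\|\xi^n\|_q^2=[n]_q!\,\|\xi\|_U^{2n}$ which the paper leaves implicit. Your reading of the stated exponent as a typo (it should be $\|\xi\|_U^{n}$) is also right, as confirmed by Corollary \ref{lem:bounded of sequence c(e)n} immediately afterwards.
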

\begin{proof} This follows by recalling that $\|c(\xi)^n\|\leq \sqrt{C_q}\|\xi^n\|_q$, as well as by noting that $[n]_q!= d_n(1-q)^{-n}\leq C_q(1-q)^{-n}$ for all $n\geq 1$. 
\end{proof}

\begin{cor}
\label{lem:bounded of sequence c(e)n}
For any $\xi\in \hu$, the sequences 
 \begin{enumerate}
     \item $(1-q)^{\frac{n}{2}}\|\xi\|_U^{-n}c(\xi)^n$
     \item  $(1-q)^{\frac{n}{2}}\|\xi\|_U^{-n}r(\xi)^n$
 \end{enumerate}
are bounded, both with an upper bound given by $C_q$.
\end{cor}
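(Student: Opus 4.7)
The claim is a direct consequence of Lemma \ref{lem:upper bound for c(xi^n)} together with the analogous statement for right $q$-creation operators. The plan is very short.

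First I would apply Lemma \ref{lem:upper bound for c(xi^n)} directly: it gives $\|c(\xi)^n\|\le C_q(1-q)^{-n/2}\|\xi\|_U^n$ (reading the exponent of $\|\xi\|$ as $n$, since the estimate in the proof comes from $\|c(\xi)^n\|\le\sqrt{C_q}\,\|\xi^n\|_q$ together with $\|\xi^n\|_q^2=[n]_q!\,\|\xi\|_U^{2n}\le C_q(1-q)^{-n}\|\xi\|_U^{2n}$). Multiplying by $(1-q)^{n/2}\|\xi\|_U^{-n}$ yields
\begin{equation*}
 \bigl\|(1-q)^{n/2}\|\xi\|_U^{-n}c(\xi)^n\bigr\|\le C_q,
\end{equation*}
which is assertion (1).

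For (2), I would observe that $r(\xi)^n$ admits exactly the same norm bound as $c(\xi)^n$. The quickest way is to note that the flip operator $F_n$ on $\hu^{\otimes_q^n}$ defined by reversing the order of tensor factors satisfies $\|F_n\xi\|_q=\|\xi\|_q$ (since $|\pi|$ and $|\pi\circ\mathrm{rev}|$ produce the same multiset of powers of $q$ summed in the inner product), and $r(\xi)=F_{n+1}c(\xi)F_n^{-1}$ on each $\hqn$. Hence $\|r(\xi)^n\|=\|c(\xi)^n\|$, and the same estimate as in (1) applies. Alternatively, one simply repeats the argument of Lemma \ref{lem:upper bound for c(xi^n)} verbatim, replacing $c(\xi)$ by $r(\xi)$: the bound $\|r(\xi)^n\Omega\|_q=\|\xi^n\|_q$ and the fact that $r(\xi)$ is bounded with $\|r(\xi)^n\|\le\sqrt{C_q}\,\|\xi^n\|_q$ (which is the right-sided analogue of the Bo\.zejko--Speicher estimate) go through without change.

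There is no real obstacle here; the only minor point is justifying the bound for $r(\xi)^n$, which is either a known fact from \cite{BS} or follows from the flip-invariance argument above. Once both bounds are in hand, the corollary is obtained by the same one-line normalization as in (1), giving the uniform bound $C_q$ in both cases.
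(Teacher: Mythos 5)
Your proof is correct and takes the route the paper clearly intends (the statement is an unproved corollary of Lemma \ref{lem:upper bound for c(xi^n)}). For part (1) you correctly read the exponent in Lemma \ref{lem:upper bound for c(xi^n)} as $\|\xi\|_U^n$ rather than the typo $\|\xi\|^{n/2}$ printed there; the computation $\|c(\xi)^n\|\le\sqrt{C_q}\,\|\xi^n\|_q=\sqrt{C_q}\,\sqrt{[n]_q!}\,\|\xi\|_U^n\le C_q(1-q)^{-n/2}\|\xi\|_U^n$ indeed gives the bound $C_q$ after normalization. For part (2), both of your justifications work; one tiny imprecision is that the permutation carrying the inner-product sum to itself after reversing tensor factors is $\mathrm{rev}\circ\pi\circ\mathrm{rev}$ (not $\pi\circ\mathrm{rev}$), but this conjugate does preserve the inversion number, so $F_n$ is isometric, $r(\xi)^n=F_{m+n}\,c(\xi)^n\,F_m^{-1}$ on each particle space, and $\|r(\xi)^n\|=\|c(\xi)^n\|$ as you claim.
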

We end this section by recalling the general structure of a one parameter group of orthogonal operators. Let $\{U_t\}_{t\in\mathbb{R}}$ be a strongly continuous orthogonal representation of $\R$ on a real Hilbert space $\hr$ and let $A$ denote the analytic generator of the complexification of $\{U_t\}_{t\in\mathbb{R}}$. Let $\h_\R^{ap}$ denote the closed real subspace generated by the eigenvectors of $A$ and let $\h_\R^{wm}={\h_\R^{ap}}^{\perp}$. The subspaces $\h_\R^{ap}$ and $\h_\R^{wm}$ are reducing under $A$ and every $\{U_t\}_{t\in\mathbb{R}}$, and they are respectively called the {\em almost periodic} and {\em weakly mixing} part of $\hr$. Similarly, ${U_t}_{|_{\hap}}$ and ${U_t}_{|_{\hwm}}$ are called the almost periodic and weakly mixing part of $\{U_t\}_{t\in\mathbb{R}}$ respectively.

Note that whenever $\hwm\neq0$, it must be infinite-dimensional. On the other hand,  the almost periodic part decomposes as follows (see \cite{Shl}):
\begin{equation*}
    (\hap, {U_t}_{|_{\hap}})=\left(\bigoplus_{j=1}^{N_1}(\R, id)\right)\oplus\left(\bigoplus_{j=1}^{N_2}\hr^k, U_t^k\right)
\end{equation*}
for some $0\leq N_1, N_2\leq \aleph_0$, where for all $1\leq k\leq N_2$ we have
\begin{equation*}
    \hr^k=\R^2, \;\;\;\; U_t^k=\begin{pmatrix}
    \cos(t\log \lambda_k)& -\sin(t\log\lambda_k)\\
    \sin(t\log\lambda_k)&\cos(t\log\lambda_k)
    \end{pmatrix} 
\end{equation*}
for some $\lambda_k\in (0,1)$.

\section{Technical analysis}\label{sec: technical analysis}

This section contains the rigorous technical analysis, where we will prove a series of Lemmas and Propositions about various convergence results. The most crucial results are Propositions \ref{prop:convergence of S_n=left wick formula}, \ref{prop:limit of cesaro sum of C(xi)^kc(eta)^kT_k} and \ref{prop: convergence of r^*(xi)r^*(eta)c(xi)c(eta)}. We recommend that readers first review the statements of these Propositions and then proceed directly to the proof of the main theorem in the next section to understand the reasons behind these calculations. The notations used in Proposition \ref{prop:convergence of S_n=left wick formula} are mentioned just before Lemma \ref{lem:boundedness of sequence S_n and R_n}.

\begin{lem}\label{lem:expression for c*(xi)mc(xi)n}
    For any fixed non-zero vector $\xi\in\hu$, $n\in\N$ and $1\leq m\leq n$, we have
    \begin{align}\label{eq:c^*(xi)^mc(xi)^m}
        c^*(\xi)^mc(\xi)^n=\frac{[n]_q!}{[n-m]_q!}\|\xi\|_U^{2m}c(\xi)^{n-m}+\sum_{k=0}^{m-1}\frac{[n]_q!}{[n-k]_q!}q^{n-k}\|\xi\|_U^{2k}c^*(\xi)^{m-k-1}c(\xi)^{n-k}c^*(\xi).
    \end{align}
\end{lem}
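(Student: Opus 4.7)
The plan is to prove the identity by induction on $m$, with the base case $m=1$ coming from a preliminary computation obtained by iterating the single $q$-commutation relation.

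First, I would handle the $m=1$ case. Starting from $c^*(\xi)c(\xi)=qc(\xi)c^*(\xi)+\|\xi\|_U^2$ and pushing $c^*(\xi)$ past $c(\xi)^n$ one step at a time, one obtains by a short induction on $n$ the auxiliary formula
\begin{equation*}
c^*(\xi)c(\xi)^n = q^n c(\xi)^n c^*(\xi) + [n]_q\|\xi\|_U^2 c(\xi)^{n-1}.
\end{equation*}
Checking that this is precisely the claimed identity with $m=1$ (the sum collapses to the single term $k=0$, giving $q^n c(\xi)^n c^*(\xi)$, and the leading term yields $[n]_q\|\xi\|_U^2 c(\xi)^{n-1}$) takes care of the base case.

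Next, I would carry out the inductive step: assuming \eqref{eq:c^*(xi)^mc(xi)^m} for some $1\leq m<n$, multiply both sides on the left by $c^*(\xi)$. The annihilation operator commutes with scalars, so the sum on the right becomes $\sum_{k=0}^{m-1}\tfrac{[n]_q!}{[n-k]_q!}q^{n-k}\|\xi\|_U^{2k}c^*(\xi)^{(m+1)-k-1}c(\xi)^{n-k}c^*(\xi)$, which already matches the first $m$ terms ($k=0,\dots,m-1$) of the sum for the index $m+1$. The only nontrivial piece is the leading term, where one applies the auxiliary formula from the base case to rewrite
\begin{equation*}
c^*(\xi)c(\xi)^{n-m}= [n-m]_q\|\xi\|_U^2 c(\xi)^{n-m-1}+q^{n-m}c(\xi)^{n-m}c^*(\xi).
\end{equation*}
The first summand then produces $\tfrac{[n]_q!}{[n-m-1]_q!}\|\xi\|_U^{2(m+1)}c(\xi)^{n-(m+1)}$, which is exactly the leading term of \eqref{eq:c^*(xi)^mc(xi)^m} at index $m+1$, while the second summand produces $\tfrac{[n]_q!}{[n-m]_q!}q^{n-m}\|\xi\|_U^{2m}c(\xi)^{n-m}c^*(\xi)$, which is precisely the missing $k=m$ term of the sum at index $m+1$. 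Combining everything finishes the induction.

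The main obstacle here is purely notational rather than conceptual: one has to be careful to track the exponent of $\|\xi\|_U$, the shift in the $q$-factorial denominator (using $[n-m]_q\cdot [n-m-1]_q!=[n-m]_q!$), and the fact that the newly generated leading-term-with-$c^*(\xi)$-on-the-right naturally slots in as the new extreme $k=m$ entry of the sum. Once the bookkeeping is done, no estimate or limiting argument is required.
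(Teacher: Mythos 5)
Your proposal is correct and follows essentially the same route as the paper: induction on $m$, with the $m=1$ case (equivalently, $c^*(\xi)c(\xi)^n = [n]_q\|\xi\|_U^2c(\xi)^{n-1}+q^n c(\xi)^n c^*(\xi)$) established first and then reused in the inductive step on the leading term to produce both the new leading term and the missing $k=m$ summand. The only cosmetic difference is that you derive the $m=1$ formula by iterating the single $q$-commutation relation, whereas the paper invokes its Lemma \ref{lem:formula for c*}.
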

\begin{proof}
The proof is by induction on $m$.    For $m=1$,  use Lemma \ref{lem:formula for c*} to write
\begin{align}\label{eq:for m=1}
 c^*(\xi)c(\xi)^n=[n]_q\|\xi\|_U^2c(\xi)^{n-1}+q^nc(\xi)^nc^*(\xi)
 \end{align} 
 which is exactly as in \eqref{eq:c^*(xi)^mc(xi)^m}. Now assume the desired equation to be true for some  $1\leq m\leq n-1$; then calculate
    \begin{align*}
        c^*(\xi)^{m+1}c(\xi)^n&=c^*(\xi)\left(c^*(\xi)^mc(\xi)^n\right)\\
        &=  \frac{[n]_q!}{[n-m]_q!}\|\xi\|_U^{2m}c^*(\xi)c(\xi)^{n-m}+\sum_{k=0}^{m-1}\frac{[n]_q!}{[n-k]_q!}\|\xi\|_U^{2k}q^{n-k}c^*(\xi)^{m-k}c(\xi)^{n-k}c^*(\xi)\\
        &\hspace{3in} (\text{by induction hypothesis})\\
        &=\frac{[n]_q!}{[n-m]_q!}\|\xi\|_U^{2m}\left([n-m]_q\|\xi\|_U^2c(\xi)^{n-m-1}+q^{n-m}c(\xi)^{n-m}c^*(\xi)\right)\\
        &\quad\quad\quad\quad+\sum_{k=0}^{m-1}\frac{[n]_q!}{[n-k]_q!}\|\xi\|_U^{2k}q^{n-k}c^*(\xi)^{m-k}c(\xi)^{n-k}c^*(\xi) \quad\quad\quad(\mbox{by using eq. \eqref{eq:for m=1} for } n-m)\\
        &=\frac{[n]_q!}{[n-m-1]_q!}\|\xi\|_U^{2m+2}c(\xi)^{n-(m+1)}+\sum_{k=0}^{m }\frac{[n]_q!}{[n-k]_q!}\|\xi\|_U^{2k}q^{n-k}c^*(\xi)^{m-k}c(\xi)^{n-k}c^*(\xi).
    \end{align*}
    The proof then ends by the induction argument.
\end{proof}

\begin{cor}\label{cor:expression for c*(bar e)^mc(bar e)^nc(e)^n}
For vectors $\xi,\eta\in \hu$ with $\langle\xi,\eta\rangle_U=0$,  $n\in\N$, $0\leq m\leq n$ and $\ell\geq 1$, we have
\begin{align*}
   c^*(\xi)^mc(\xi)^nc(\eta)^\ell= \frac{[n]_q!}{[n-m]_q!}\|\xi\|_U^{2m}c(\xi)^{n-m}c(\eta)^\ell+q^\ell X
\end{align*}
 where  $X$ is an operator bounded by  $A_q'(1-q)^{-\frac{(m+n+\ell)}{2}}\|\xi\|_U^{m+n}\|\eta\|_U^{\ell}$ for some constant $A_q'$ depending only on $q$.
\end{cor}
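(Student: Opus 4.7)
The plan is to apply Lemma \ref{lem:expression for c*(xi)mc(xi)n} directly and right-multiply both sides by $c(\eta)^\ell$. The crucial observation will be that, since $\langle\xi,\eta\rangle_U=0$, iterating the $q$-commutation relation $c^*(\xi)c(\eta)=qc(\eta)c^*(\xi)+\langle\xi,\eta\rangle_U = qc(\eta)c^*(\xi)$ yields $c^*(\xi)c(\eta)^\ell = q^\ell c(\eta)^\ell c^*(\xi)$. Each error term in Lemma \ref{lem:expression for c*(xi)mc(xi)n} ends in $c^*(\xi)$, so applying this identity pulls the factor $c(\eta)^\ell$ to the left at the cost of a $q^\ell$. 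This reproduces the main term $\frac{[n]_q!}{[n-m]_q!}\|\xi\|_U^{2m}c(\xi)^{n-m}c(\eta)^\ell$ and identifies
\[
X=\sum_{k=0}^{m-1}\frac{[n]_q!}{[n-k]_q!}q^{n-k}\|\xi\|_U^{2k}c^*(\xi)^{m-k-1}c(\xi)^{n-k}c(\eta)^\ell c^*(\xi).
\]

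The remaining task is the uniform bound on $\|X\|$, which is just bookkeeping. For each operator factor I would invoke Lemma \ref{lem:upper bound for c(xi^n)} in the form $\|c(\xi)^p\|=\|c^*(\xi)^p\|\leq C_q(1-q)^{-p/2}\|\xi\|_U^p$, and likewise for $c(\eta)^\ell$. For the combinatorial coefficient I would use
$\frac{[n]_q!}{[n-k]_q!}=\frac{d_n}{d_{n-k}}(1-q)^{-k}\leq C_q^2(1-q)^{-k}$, which follows from $d_n\leq C_q$ and $1/d_{n-k}\leq C_q$. Multiplying everything, a direct count shows that the total exponent of $(1-q)^{-1/2}$ is $m+n+\ell$ and the total powers of $\|\xi\|_U$ and $\|\eta\|_U$ are $m+n$ and $\ell$ respectively, all independent of $k$.

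The only delicate point is that $X$ has $m$ summands and $m$ is not bounded by a constant depending only on $q$. This is precisely where the surviving factor $|q|^{n-k}$ in each term becomes essential: with $k$ ranging over $0,\ldots,m-1$ and $m\leq n$, a geometric-series estimate gives $\sum_{k=0}^{m-1}|q|^{n-k}\leq \frac{1}{1-|q|}$, independently of $m$, $n$, $\ell$. Absorbing all $q$-dependent constants into a single $A_q'$ (e.g.\ $A_q'=C_q^6/(1-|q|)$) then delivers the claimed bound. I expect the one step requiring care is the exponent tally; conceptually the proof is a one-line commutation followed by norm estimates already established in the preliminaries.
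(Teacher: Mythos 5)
Your proposal is correct and follows essentially the same route as the paper: right-multiply the identity from Lemma \ref{lem:expression for c*(xi)mc(xi)n} by $c(\eta)^\ell$, commute $c^*(\xi)$ past $c(\eta)^\ell$ at cost $q^\ell$ using orthogonality, and bound $X$ term-by-term via Lemma \ref{lem:upper bound for c(xi^n)} and $\frac{[n]_q!}{[n-k]_q!}=\frac{d_n}{d_{n-k}}(1-q)^{-k}\leq C_q^2(1-q)^{-k}$, with the surviving factor $|q|^{n-k}$ yielding the geometric estimate $\sum_{k=0}^{m-1}|q|^{n-k}\leq\frac{1}{1-|q|}$; your exponent tally and the resulting constant $A_q'=C_q^6/(1-|q|)$ agree exactly with the paper's.
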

\begin{proof}
If $m=0$ then set $X=0$, and if $m\geq1$ then set \[X=\sum_{k=0}^{m-1}\frac{[n]_q!}{[n-k]_q!}q^{n-k}\|\xi\|_U^{2k}c^*(\xi)^{m-k-1}c(\xi)^{n-k}c(\eta)^\ell c^*(\xi).\] Obtain the required expression by multiplying from the right in eq. \eqref{eq:c^*(xi)^mc(xi)^m} of Lemma \ref{lem:expression for c*(xi)mc(xi)n}  by $c(\eta)^\ell$ and then by noting via $q$-commutation relation that $c^*(\xi)c(\eta)^\ell=q^\ell c(\eta)^\ell c^*(\xi)$. Further, we estimate the norm of $X$ using Lemma \ref{lem:upper bound for c(xi^n)} as follows: 
    \begin{align*}
        \|X\|&\leq C_q^4\sum_{k=0}^{m-1}\frac{[n]_q!}{[n-k]_q!}|q|^{n-k}\|\xi\|_U^{2k}\|\xi\|_U^{(m-k-1)+(n-k)+1}\|\eta\|_U^\ell(1-q)^{-\frac{1}{2}(m-k-1)-\frac{1}{2}(n-k)-\frac{\ell}{2}-\frac{1}{2}}\\
        &= C_q^4  \sum_{k=0}^{m-1}\frac{d_n(1-q)^{-n}}{d_{n-k}(1-q)^{-n+k}} |q|^{n-k} \|\xi\|_U^{m+n}\|\eta\|_U^\ell  (1-q)^{-\frac{(m+n+\ell)}{2}+k}\\
        &\leq C_q^6  \|\xi\|_U^{m+n}\|\eta\|_U^\ell  (1-q)^{-\frac{(m+n+\ell)}{2}}\sum_{k=0}^{m-1}|q|^{n-k}\quad\quad\quad\quad(\text{as }d_n, \frac{1}{d_{n-k}}\leq C_q)\\
        &= \|\xi\|_U^{m+n}\|\eta\|_U^\ell  (1-q)^{-\frac{(m+n+\ell)}{2}}\frac{C_q^6}{1-|q|}
    \end{align*}
which gives the required estimate by setting $A_q'=\frac{C_q^6}{1-|q|}$.
\end{proof}

\begin{cor}\label{cor:when m>n}
For vectors $\xi,\eta\in \hu$ with $\langle\xi,\eta\rangle_U=0$, and $m> n\geq1$, $\ell\geq 1$, we have
\begin{align*}
   c^*(\xi)^mc(\xi)^nc(\eta)^\ell= q^\ell Y
\end{align*}
 where  $Y$ is an operator bounded by  $A_q(1-q)^{-\frac{(m+n+\ell)}{2}}\|\xi\|_U^{m+n}\|\eta\|_U^{\ell}$ for some constant $A_q$ depending only on $q$.
\end{cor}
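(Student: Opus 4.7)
The argument parallels Corollary \ref{cor:expression for c*(bar e)^mc(bar e)^nc(e)^n}, but the hypothesis $m>n$ is precisely what forces the resulting operator to carry an overall factor of $q^\ell$. Since Lemma \ref{lem:expression for c*(xi)mc(xi)n} only applies when the number of $c^*(\xi)$'s is at most the number of $c(\xi)$'s, my first step is to factor $c^*(\xi)^m=c^*(\xi)^{m-n}c^*(\xi)^n$ and apply that lemma at $m=n$ to the rightmost block $c^*(\xi)^n c(\xi)^n$. This yields
\begin{align*}
c^*(\xi)^m c(\xi)^n = [n]_q!\,\|\xi\|_U^{2n}\,c^*(\xi)^{m-n}+\sum_{k=0}^{n-1}\frac{[n]_q!}{[n-k]_q!}q^{n-k}\|\xi\|_U^{2k}c^*(\xi)^{m-k-1}c(\xi)^{n-k}c^*(\xi).
\end{align*}
The key structural observation is that, because $m-n\ge 1$, \emph{every} summand ends with a positive power of $c^*(\xi)$ on the right.

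Next, I would multiply on the right by $c(\eta)^\ell$ and commute the trailing $c^*(\xi)$'s past using the orthogonality-driven identity $c^*(\xi)c(\eta)^\ell=q^\ell c(\eta)^\ell c^*(\xi)$. The leading term produces $q^{\ell(m-n)}$, from which a single $q^\ell$ is extracted, while each summand contributes exactly one $q^\ell$ from its trailing $c^*(\xi)$. Collecting $q^\ell$ out front therefore gives $c^*(\xi)^m c(\xi)^n c(\eta)^\ell=q^\ell Y$ with
\begin{align*}
Y=[n]_q!\,\|\xi\|_U^{2n}\,q^{\ell(m-n-1)}c(\eta)^\ell c^*(\xi)^{m-n}+\sum_{k=0}^{n-1}\frac{[n]_q!}{[n-k]_q!}q^{n-k}\|\xi\|_U^{2k}c^*(\xi)^{m-k-1}c(\xi)^{n-k}c(\eta)^\ell c^*(\xi).
\end{align*}

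Finally, to estimate $\|Y\|$ I would apply Lemma \ref{lem:upper bound for c(xi^n)} to each creation/annihilation factor and use the bounds $[n]_q!\le C_q(1-q)^{-n}$ and $[n]_q!/[n-k]_q!\le C_q^2(1-q)^{-k}$, which is the same bookkeeping as in the proof of Corollary \ref{cor:expression for c*(bar e)^mc(bar e)^nc(e)^n}: the total $\|\xi\|_U$-power comes out to $m+n$ and the total $(1-q)$-power to $-(m+n+\ell)/2$ in every term. For the sum, $\sum_{k=0}^{n-1}|q|^{n-k}\le |q|/(1-|q|)$, and for the leading term $|q^{\ell(m-n-1)}|\le 1$. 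The only substantive point beyond the previous corollary is the observation that $m>n$ forces the scalar leading term $[n]_q!\|\xi\|_U^{2n}$ appearing in Lemma \ref{lem:expression for c*(xi)mc(xi)n} to be multiplied by $c^*(\xi)^{m-n}$, an operator of positive degree in $c^*(\xi)$, so that \emph{every} resulting term produces a $q^\ell$ factor after commuting past $c(\eta)^\ell$; beyond this, there is no real obstacle.
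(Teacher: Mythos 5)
Your proposal is correct and follows essentially the same route as the paper: it isolates the block $c^*(\xi)^n c(\xi)^n$, applies Lemma \ref{lem:expression for c*(xi)mc(xi)n} at the diagonal case, left-multiplies by $c^*(\xi)^{m-n}$, and extracts $q^\ell$ by commuting the trailing $c(\eta)^\ell$ past the residual $c^*(\xi)$ factors, arriving at the same $Y$ and the same bookkeeping. The only cosmetic difference is that the paper reuses Corollary \ref{cor:expression for c*(bar e)^mc(bar e)^nc(e)^n} (which has $c(\eta)^\ell$ already absorbed) whereas you apply the underlying lemma directly and append $c(\eta)^\ell$ afterward; your explicit use of $|q|^{\ell(m-n-1)}\le 1$ also makes the constant $A_q$ manifestly independent of $m,n,\ell$, a small tidying of the paper's statement.
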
    
\begin{proof}
    Using  Corollary \ref{cor:expression for c*(bar e)^mc(bar e)^nc(e)^n},  write
    \begin{align*}
        c^*(\xi)^nc(\xi)^nc(\eta)^\ell=[n]_q!\|\xi\|_U^{2n}c(\eta)^\ell+q^\ell X
    \end{align*}
    where $X$ is an operator bounded by $\frac{C_q^6}{1-|q|}(1-q)^{-n-\frac{\ell}{2}}\|\xi\|_U^{2n}\|\eta
    \|_U^\ell$. By multiplying both sides from the left by $c^*(\xi)^{m-n}$ and using the $q$-commutation relation $c^*(\xi)^{m-n}c(\eta)^\ell=q^{(m-n)\ell}c(\eta)^\ell c^*(\xi)^{(m-n)}$ since $\langle\xi,\eta\rangle_U=0$, we get
    \begin{align*}
        c^*(\xi)^mc(\xi)^nc(\eta)^\ell=q^{(m-n)\ell}[n]_q!\|\xi\|_U^{2n}c(\eta)^\ell c^*(\xi)^{m-n}+q^\ell c^*(\xi)^{m-n}X=q^\ell Y
    \end{align*}
    where $Y=q^{(m-n-1)\ell}[n]_q!\|\xi\|_U^{2n}c(\eta)^\ell c^*(\xi)^{m-n}+ c^*(\xi)^{m-n}X$. It is an easy  verify as before using Lemma \ref{lem:upper bound for c(xi^n)} that \begin{align*}
        \|Y\|&\leq |q|^{(m-n-1)\ell}d_nC_q^2\|\xi\|_U^{m+n}\|\eta\|_U^\ell (1-q)^{-\frac{(m+n+\ell)}{2}}+\frac{C_q^6}{1-|q|}\|\xi\|_U^{m+n}\|\eta\|_U^\ell C_q (1-q)^{-\frac{(m+n+\ell)}{2}}\\
        &\leq A_q \|\xi\|_U^{m+n}\|\eta\|_U^\ell(1-q)^{-\frac{(m+n+\ell)}{2}}
    \end{align*}
    where $A_q=|q|^{(m-n-1)\ell}C_q^3+\frac{C_q^7}{1-|q|}$.
\end{proof}

\begin{prop}\label{prop:limit of cesaro sum of C(xi)^kc(eta)^kT_k}
For any non-zero vectors $\xi,\eta\in \hu$ with $\langle\xi,\eta\rangle_U=0$ and a uniformly bounded sequence $\{T_k\}_{k\geq1}$ of operators, the sequences
\begin{enumerate}
    \item $\frac{1}{n}\sum_{k=1}^n(1-q)^{k}\|\xi\|_U^{-k}\|\eta\|_U^{-k}c(\xi)^kc(\eta)^kT_k$
    \item $\frac{1}{n}\sum_{k=1}^n(1-q)^{k}\|\xi\|_U^{-k}\|\eta\|_U^{-k}r(\xi)^kr(\eta)^kT_k$
    \end{enumerate}
  converge to $0$ in norm as $n\to\infty$. 
\end{prop}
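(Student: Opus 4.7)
The plan is to set $A_k := (1-q)^k\|\xi\|_U^{-k}\|\eta\|_U^{-k}c(\xi)^kc(\eta)^k$, so that $\|A_k\|\leq C_q^2$ by Corollary \ref{lem:bounded of sequence c(e)n}; after rescaling we may assume $\|T_k\|\leq 1$. Writing $S_n := \frac{1}{n}\sum_{k=1}^n A_kT_k$, the triangle inequality gives
\[\|S_n\|^2 = \|S_n^*S_n\| \leq \frac{1}{n^2}\sum_{j,k=1}^n\|A_j^*A_k\|,\]
so the entire proof reduces to showing $\sum_{j,k=1}^n\|A_j^*A_k\| = O(n)$, from which $\|S_n\|^2 = O(1/n)\to 0$.

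The central estimate is the pointwise bound
\[\|A_j^*A_k\| \leq C|q|^{k-j}\ (1\leq j\leq k),\qquad \|A_j^*A_k\|\leq C|q|^{k}\ (1\leq k < j),\]
for some $C = C(q)$. To prove it, expand
\[A_j^*A_k = (1-q)^{j+k}\|\xi\|_U^{-(j+k)}\|\eta\|_U^{-(j+k)}\,c^*(\eta)^jc^*(\xi)^jc(\xi)^kc(\eta)^k.\]
For $j\leq k$, apply Corollary \ref{cor:expression for c*(bar e)^mc(bar e)^nc(e)^n} to replace $c^*(\xi)^jc(\xi)^kc(\eta)^k$ by its principal term $\tfrac{[k]_q!}{[k-j]_q!}\|\xi\|_U^{2j}c(\xi)^{k-j}c(\eta)^k$ plus a $q^k$-bounded error. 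Then use the orthogonality-induced commutation $c^*(\eta)^jc(\xi)^{k-j} = q^{j(k-j)}c(\xi)^{k-j}c^*(\eta)^j$, and apply Lemma \ref{lem:expression for c*(xi)mc(xi)n} to expand $c^*(\eta)^jc(\eta)^k$. Tracking the $(1-q)$-factors, the $d_k$-ratios (bounded via $d_k, 1/d_{k-j}\leq C_q$), and the operator-norm bounds from Lemma \ref{lem:upper bound for c(xi^n)} yields
\[A_j^*A_k = q^{j(k-j)}\left(\frac{d_k}{d_{k-j}}\right)^{\!2}\!A_{k-j} + E_{j,k},\qquad \|E_{j,k}\|\leq C|q|^{k-j},\]
where $E_{j,k}$ collects the two error sources from the Corollary and Lemma expansions. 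Since $j(k-j)\geq k-j$ for $j\geq 1$ and $\|A_{k-j}\|\leq C_q^2$, the principal term is also $O(|q|^{k-j})$. For $j>k$, Corollary \ref{cor:when m>n} gives $c^*(\xi)^jc(\xi)^kc(\eta)^k = q^kY$ with a norm bound on $Y$ that, after multiplying by $c^*(\eta)^j$ and the prefactor, makes all $(1-q)$- and $\|\cdot\|$-weights cancel, leaving $\|A_j^*A_k\|\leq C|q|^k$.

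With this pointwise estimate, the summation is routine: the diagonal $j=k$ contributes $\leq nC_q^4$, the region $j<k$ contributes $\leq\sum_{m\geq1}(n-m)|q|^m\leq n|q|/(1-|q|)$ (setting $m=k-j$), and $j>k$ contributes $\leq\sum_{k}(n-k)|q|^k\leq n|q|/(1-|q|)$. Hence $\sum_{j,k\leq n}\|A_j^*A_k\|=O(n)$, proving (1). Part (2) follows by the identical argument with $r,r^*$ in place of $c,c^*$ and the analogous right $q$-commutation relations; the right analogues of Lemma \ref{lem:expression for c*(xi)mc(xi)n} and its corollaries are derived in exactly the same way. The main technical subtlety throughout is the careful bookkeeping of all error terms so that each retains at least a $|q|^{|k-j|}$ (or $|q|^k$) factor after the normalization weights cancel the diverging $(1-q)^{-p/2}$-type operator-norm bounds from Lemma \ref{lem:upper bound for c(xi^n)}.
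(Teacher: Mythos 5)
Your argument is correct and follows essentially the same strategy as the paper: estimate $\|S_n^*S_n\|$ via the triangle inequality, split the double sum by the sign of $j-k$, and use the orthogonality-induced $q$-commutation (Corollary \ref{cor:when m>n}) to extract a geometric factor that makes the off-diagonal contribution $O(n)$, hence $\|S_n\|^2=O(1/n)$. The one place you take a longer road is the creation-heavy case $j<k$: the paper disposes of this half of the off-diagonal by the norm symmetry $\|T_j^*A_j^*A_kT_k\|=\|T_k^*A_k^*A_jT_j\|$ and never needs the Corollary \ref{cor:expression for c*(bar e)^mc(bar e)^nc(e)^n}\,+\,Lemma \ref{lem:expression for c*(xi)mc(xi)n} expansion you carry out, which yields a sharper $|q|^{|j-k|}$ bound but is more work than the crude $|q|^{\min(j,k)}$ already provides.
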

\begin{proof}
We shall prove the convergence of the first sequence, which we call $X_n$. The proof for the second sequence follows similarly. By replacing $\xi$ and $\eta$ with $\frac{\xi}{\|\xi\|_U}$ and $\frac{\eta}{\|\eta\|_U}$ respectively, we may and will  assume without loss of generality that $\xi$ and $\eta$ are unit vectors. Also assume that $\sup_{k\geq1}\|T_k\|= 1$. Now calculate
   \begin{align*}
       X_n^*X_n&=\frac{1}{n^2}\sum_{\ell=1}^n\sum_{k=1}^n(1-q)^{\ell+k}T_\ell^* c^*(\eta)^\ell c^*(\xi)^\ell c(\xi)^kc(\eta)^kT_k\\
       &=X^n_{\ell=k}+X^n_{k<\ell}+X^n_{k>\ell}
   \end{align*}
   where we have divided the sum over three cases: $\ell=k, k<\ell$ and $k>\ell$ i.e.
   \begin{align*}
       &X_{\ell=k}^n= \frac{1}{n^2}\sum_{k=1}^n(1-q)^{2k}T_k^* c^*(\eta)^k c^*(\xi)^k c(\xi)^kc(\eta)^kT_k\\
       & X_{k<\ell}^n=\frac{1}{n^2}\sum_{1\leq k<\ell\leq n}(1-q)^{\ell+k}T_\ell^* c^*(\eta)^\ell c^*(\xi)^\ell c(\xi)^kc(\eta)^kT_k\\
       & X_{\ell<k}^n=\frac{1}{n^2}\sum_{n\geq k>\ell\geq1}(1-q)^{\ell+k}T_\ell^* c^*(\eta)^\ell c^*(\xi)^\ell c(\xi)^kc(\eta)^kT_k.
   \end{align*}
  We now estimate the norm of the three sums separately. Firstly, over $k=\ell$, we estimate the norm using Lemma \ref{lem:upper bound for c(xi^n)} as follows:
   \begin{equation*}
      \|X_{\ell=k}^n\|\leq \frac{1}{n^2}\sum_{k=1	}^n C_q^4=\frac{C_q^4}{n}.
   \end{equation*}
   We next  estimate the norm of $X_{k<\ell}^n$: for $k<\ell$,  we use Corollary \ref{cor:when m>n} to write  $c^*(\xi)^\ell c(\xi)^kc(\eta)^k=q^kX_{k,\ell}$ where $X_{k,\ell}$ is an  operator bounded by $A_q(1-q)^{-\frac{\ell}{2}-k}$. Therefore, we have
   \begin{align*}
    \|X_{k<\ell}^n\|&=   \frac{1}{n^2}\left\|\sum_{1\leq k<\ell\leq n}(1-q)^{\ell+k}c^*(\eta)^\ell T_\ell^* \left(q^kX_{k,\ell}\right)T_k\right\|\\
       &\leq \frac{C_qA_q}{n^2}\sum_{1\leq k<\ell\leq n}|q|^k(1-q)^{\ell+k}(1-q)^{-\frac{\ell}{2}-k}(1-q)^{-\frac{\ell}{2}}\\
       &\leq \frac{C_qA_q}{n^2}\sum_{k,\ell=1}^n|q|^k\\
       &\leq \frac{C_qA_q}{n}\sum_{k=1}^\infty |q|^k\\
       &\leq\frac{1}{n}\cdot \frac{C_qA_q}{1-|q|}.
   \end{align*}
   Similarly,  by symmetry  $X_{k>\ell}^n=(X_{k<\ell}^n)^*$,  $\|X^n_{k>\ell}\|$ has the same estimate  $\|X_{k<\ell}^n\|$. Thus we obtain that 
   \begin{align*}
       \|X_n^*X_n\|\leq \frac{1}{n} \left({C_q^4}+\frac{2C_qA_q}{1-|q|}\right)
   \end{align*}
   which proves that $X_n\to0$ in norm as $n\to\infty$.
\end{proof}

\begin{rem}
    It is very important that we put the operator $T_k$ on the right side in Proposition \ref{prop:limit of cesaro sum of C(xi)^kc(eta)^kT_k} and not on the left. In fact, the operator $\tilde{S}_n=\frac{1}{n}\sum_{k=0}^n(1-q)^k\tilde{T}_kc(\xi)^kc(\eta)^k$ need not converge to $0$ for fixed unit vectors $\xi,\eta\in\hu$ with $\langle\xi,\eta\rangle_U=0$ and bounded operators  $\tilde{T}_k$. See for example Proposition \ref{prop:convergence of S_n=left wick formula} below. 
\end{rem}


We now restrict our attention to a two-dimensional invariant subspace of a given representation $\{U_t\}_{t\in\mathbb{R}}$. Let $\kr=\R^2$ be an invariant subspace of $\{U_t\}_{t\in\mathbb{R}}$ such that ${U_t}_{|_{\kr}}$ is ergodic i.e. has no fixed nonzero vector. Then, as discussed in Section \ref{sec:preliminaries}, there is a scalar $\lambda\in (0,1)$ such that 
\begin{align*}
  {U_t}_{|_{\kr}}=  \begin{pmatrix}
        \cos(t\log\lambda)&-\sin(t\log\lambda)\\
        \sin(t\log\lambda)&\cos(t\log\lambda)
    \end{pmatrix}.
\end{align*}
Set \begin{align*}e=\frac{({\lambda+1})^{\frac{1}{2}}}{2\lambda^{\frac{1}{4}}}\begin{pmatrix}
    1\\i 
\end{pmatrix}\in\mathsf{K}_\C=\C^2.
\end{align*}
If $A$ denotes the analytic generator, then it is immediate to check that
 \[Ae=\lambda^{-1}e,\;\;\;A\bar{e}=\lambda\bar{e},\;\;\;\|e\|_U=\lambda^{-\frac{1}{4}},\;\;\; \|\bar{e}\|_U=\lambda^{\frac{1}{4}} \;\;\;\mbox{ and  }\langle e,\bar{e}\rangle_U=0.\] Moreover, the (right) conjugate of $e$ and $\bar{e}$ with respect to the decomposition in $\hr'+i\hr'$ is given as follows: 
 \[\bar{e}^r=\lambda^{-1}\bar{e}  \;\;\;\mbox{ and }\;\;\;\overline{(\bar{e})}^r=\lambda e.\]
  Indeed it is straightforward to check that $(1+A^{-1})^{-1}(e+\lambda^{-1}\Bar{e})=\frac{1}{1+\lambda}(e+\Bar{e})\in \hr$ and hence $\langle e+\lambda^{-1}\Bar{e},\xi\rangle_U\in \R$ for all $\xi\in \hr$, and similarly check that $\langle e-\lambda^{-1}\Bar{e},\xi\rangle_U\in i\hr$ for all $\xi\in \hr$; this will prove that $e+\lambda^{-1}\Bar{e}\in \hr'$ and $e-\lambda^{-1}\bar{e}\in i\hr'$ and hence $ \bar{e}^r=\lambda^{-1}\bar{e}$.
  
We fix such $\lambda, e$ and $\bar{e}$ for the rest of this section.   The notations used in what follows have been explained in Section \ref{sec:preliminaries}.  Our next main result is Proposition \ref{prop:convergence of S_n=left wick formula}, but first we need two crucial lemmas.   The following lemma is borrowed from \cite{BMRW}. 

\begin{lem}\label{lem:boundedness of sequence S_n and R_n}
    The sequences $\{(1-q)^nW(\bar{e}^ne^n)\}_{n\geq 1}$ and $\{(1-q)^nW_r(\bar{e}^ne^n)\}_{n\geq1}$ are bounded.
\end{lem}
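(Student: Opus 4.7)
The plan is to expand $W(\bar{e}^n\otimes e^n)$ via the Wick formula of Proposition \ref{prop:formula for Wick operators} and then bound the resulting explicit expression using Lemma \ref{lem:upper bound for c(xi^n)}, exploiting the key cancellation $\|\bar{e}\|_U\|e\|_U=1$.

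First I would observe that for the tuple $\xi_1=\cdots=\xi_n=\bar{e}$, $\xi_{n+1}=\cdots=\xi_{2n}=e$, every permutation $\sigma\in S_{2n,i}$ is determined by the subset $S=\sigma(\{1,\ldots,i\})$ of $\{1,\ldots,2n\}$. Writing $a=|S\cap\{1,\ldots,n\}|$ and $b=|S\cap\{n+1,\ldots,2n\}|$, the monotonicity of $\sigma$ on $\{1,\ldots,i\}$ and on $\{i+1,\ldots,2n\}$ forces the creation block to be $c(\bar{e})^ac(e)^b$ and the annihilation block $c^*(e)^{n-a}c^*(\bar{e})^{n-b}$. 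Since the only inversions occur between the two halves, a direct count gives $|\sigma|=b(n-a)+\textup{inv}(S\cap\{1,\ldots,n\})+\textup{inv}(S\cap\{n+1,\ldots,2n\})$, and summing over the two independent subsets produces the $q$-binomial identities, yielding
\begin{equation*}
W(\bar{e}^ne^n)=\sum_{a,b=0}^nq^{b(n-a)}\binom{n}{a}_q\binom{n}{b}_q\,c(\bar{e})^ac(e)^bc^*(e)^{n-a}c^*(\bar{e})^{n-b}.
\end{equation*}

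Next, combining Lemma \ref{lem:upper bound for c(xi^n)} with $\|\bar{e}\|_U=\lambda^{1/4}$ and $\|e\|_U=\lambda^{-1/4}$, the four operator factors yield a product norm of at most $C_q^4(1-q)^{-n}\lambda^{(a-b)/2}$; the powers of $\lambda$ contributed by the creations and annihilations telescope, leaving only the imbalance $(a-b)/2$. Using also $|\binom{n}{k}_q|\leq C_q^3$ (since $|d_n|$ and $1/|d_n|$ are bounded by $C_q$), one obtains
\begin{equation*}
(1-q)^n\|W(\bar{e}^ne^n)\|\leq C_q^{10}\sum_{a,b=0}^n|q|^{b(n-a)}\lambda^{(a-b)/2}.
\end{equation*}

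The main technical obstacle is to bound this double sum uniformly in $n$. Substituting $k=n-a$ and pulling $\lambda^{(n-k)/2}$ out of the inner sum in $b$, one is left with the geometric series $\sum_{b=0}^n\beta_k^b$ in $\beta_k:=|q|^k\lambda^{-1/2}$. For \emph{good} $k\geq k_0:=\lceil\log(\lambda^{1/2})/\log|q|\rceil$, one has $\beta_k<1$, so the inner sum is bounded by $1/(1-\beta_{k_0})$ and the remaining sum over $k$ becomes a convergent geometric series in $\lambda^{1/2}$. For the finitely many \emph{bad} values $k<k_0$, the closed form of the finite geometric series contains the factor $|q|^{k(n+1)}$, which produces super-exponential decay in $n$ and keeps each bad row bounded uniformly in $n$. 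The argument for $\{(1-q)^nW_r(\bar{e}^ne^n)\}_n$ is completely analogous, starting from Proposition \ref{prop:formula for right Wick product} and using the formulas $\bar{e}^r=\lambda^{-1}\bar{e}$, $\overline{\bar{e}}^r=\lambda e$ together with the right-creation/annihilation bounds of Corollary \ref{lem:bounded of sequence c(e)n}; the $\lambda^{\pm 1}$ factors from the conjugation $\zeta\mapsto\bar\zeta^r$ again telescope to a single $\lambda^{(a-b)/2}$ in the final estimate.
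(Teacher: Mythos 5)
Your approach mirrors the paper's (which itself sketches the argument from \cite{BMRW}): expand $W(\bar{e}^ne^n)$ via the Wick formula, identify the coefficient via inversions/crossings, bound the four creation/annihilation blocks with Lemma \ref{lem:upper bound for c(xi^n)} so that the $\lambda^{\pm 1/4}$ factors telescope to $\lambda^{(a-b)/2}$, and show the scalar double sum $\sum_{a,b}|q|^{b(n-a)}\lambda^{(a-b)/2}$ is bounded uniformly in $n$. Your explicit identification $q^{(n)}_{a,b}=q^{b(n-a)}\binom{n}{a}_q\binom{n}{b}_q$ is correct (the paper cites \cite{BKS} for the special case $b=0$) and immediately gives the bound $|q^{(n)}_{a,b}|\leq C_q^{\,c}|q|^{b(n-a)}$ that the paper records.

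One small correction in the double-sum estimate: your claim that for bad $k<k_0$ the factor $|q|^{k(n+1)}$ in the closed form of the geometric series ``produces super-exponential decay'' is false for $k=0$, where $|q|^{k(n+1)}\equiv 1$. That row is nonetheless uniformly bounded, but for a different reason: with $k=0$ the prefactor $\lambda^{(n-k)/2}=\lambda^{n/2}$ exactly offsets the leading $\lambda^{-(n+1)/2}$ coming from $\beta_0^{n+1}$, giving
\[
\lambda^{n/2}\sum_{b=0}^n\lambda^{-b/2}=\sum_{b=0}^n\lambda^{(n-b)/2}\leq\frac{1}{1-\lambda^{1/2}},
\]
uniformly in $n$. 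So you should split off the $k=0$ row and treat it by this observation; your decay argument then applies cleanly to $1\leq k<k_0$. Finally, for the right Wick operators the paper uses the shortcut $W_r(\bar{e}^ne^n)=J_\varphi W(\bar{e}^ne^n)J_\varphi$ to get equality of norms at once; your direct rerun via Corollary \ref{cor:formula for W_rbar{e}^ne^n} also works but is longer than necessary.
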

\begin{proof}
   We briefly sketch the proof from \cite[Lemma 3.8]{BMRW} in order to set some notations and record some useful observations. Note that both the sequences have the same norm; indeed we have $W_r(\bar{e}^ne^n)=J_\varphi W(\bar{e}^ne^n)J_\varphi$. We consider the first sequence which by Wick formula is given by
   \begin{align}\label{eq:expression for W(bar e^ne^n)}
W(\Bar{e}^ne^n)=\sum_{k,\ell=0}^nq_{k,\ell}^{(n)}c(\Bar{e})^kc(e)^\ell c^*(e)^{n-k}c^*(\Bar{e})^{n-\ell}
   \end{align}
  where $q_{k,\ell}^{(n)}=\sum_{J_1, J_2}q^{c(J, J^c)}$ with the sum running over all possible subsets $J_1\subseteq\{1,\ldots,n\}$ of cardinality $k$ and $J_2\subseteq\{n+1,\ldots,2n\}$ of cardinality $\ell$ such that $J=J_1\cup J_2$. Here $c(J, J^c)$ denotes the number of crossings of the partition $J\cup J^c$ of $\{1,2,\ldots,2n\}$ (see \cite[Lemma 3.8]{BMRW} for more explanation of the terms $q_{k,\ell}^{(n)}$). Then we have $|q_{k,\ell}^{(n)}|\leq C_q^2|q|^{(n-k)\ell}$ and \begin{align*}
     (1-q)^n \sum_{k,\ell=0}^n |q_{k,\ell}^{(n)}|\;\|c(\Bar{e})^k\|\;\|c(e)^\ell\|\;\| c^*(e)^{n-k}\|\;\|c^*(\Bar{e})^{n-\ell}\|\leq C_q^6\sum_{k,\ell=0}^n |q|^{(n-k)\ell}\lambda^{\frac{k-\ell}{2}}.
   \end{align*}
   The important observation is that $\sum_{k,\ell=0}^n |q|^{(n-k)\ell}\lambda^{\frac{k-\ell}{2}}\leq \frac{1}{(1-|q|)(1-\lambda^{\frac{1}{2}})}+\frac{(n+1)|q|^n\lambda^{-\frac{1}{2}}}{1-|q|^n\lambda^{-\frac{1}{2}}}$ as $n\to\infty$, which means that there is a constant 
   $B(q,\lambda)$ depending on $q$ and $\lambda$ such that
    \[\sum_{k,\ell=0}^n |q|^{(n-k)\ell}\lambda^{\frac{k-\ell}{2}}\leq B(q,\lambda)\] for all $n\geq 1$.  This observation will be useful for us later as well (also see Lemma \ref{lem:convergence of the sclar sequence s_n} below).
\end{proof}

\begin{lem}\label{lem:convergence of the sclar sequence s_n}
    For any $q\in (-1,1)$ and $\lambda\in (0,1)$, the sequence \[s_n=\frac{1}{n}\sum_{m=1}^n\sum_{\ell=1}^m\sum_{k=0}^{m-1}q^{(m-k)\ell}\lambda^{\frac{k-\ell}{2}}\]
    converges to $0$ as $n\to \infty$. 
\end{lem}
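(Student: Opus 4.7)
The plan is to show that the inner double sum
\[a_m := \sum_{\ell=1}^m\sum_{k=0}^{m-1} q^{(m-k)\ell}\lambda^{(k-\ell)/2}\]
tends to $0$ as $m\to\infty$; since $s_n=\frac{1}{n}\sum_{m=1}^n a_m$, the desired conclusion will then follow from the standard Ces\`aro mean theorem.

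To analyse $a_m$, I would make the substitution $j=m-k$ (so $j=1,\dots,m$), which recasts the sum as
\[a_m=\lambda^{m/2}\sum_{\ell=1}^m\lambda^{-\ell/2}\sum_{j=1}^m x_\ell^{\,j},\qquad x_\ell:=q^\ell\lambda^{-1/2},\]
whose inner $j$-sum is a finite geometric series. Since $|x_\ell|=|q|^\ell\lambda^{-1/2}$ is strictly decreasing in $\ell$ with $|x_\ell|\to 0$, fix a threshold $\ell^\ast$ so that $|x_\ell|\le\tfrac{1}{2}$ for every $\ell\ge\ell^\ast$, and split the outer $\ell$-sum at $\ell^\ast$.

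For the tail $\ell\ge\ell^\ast$ the inner geometric sum is controlled by $|x_\ell|/(1-|x_\ell|)\le 2|x_\ell|$ uniformly in $m$, so the tail contribution to $a_m$ is bounded by $2\lambda^{(m-1)/2}\sum_{\ell=\ell^\ast}^{m}(|q|/\sqrt\lambda)^\ell$. When $|q|<\sqrt\lambda$ this outer sum stays bounded and the prefactor $\lambda^{m/2}$ sends the tail to $0$; when $|q|=\sqrt\lambda$ it is of order $m$, and $\lambda^{m/2}$ still wins; when $|q|>\sqrt\lambda$ it grows like $(|q|/\sqrt\lambda)^m$, and the cancellation with $\lambda^{m/2}$ collapses the tail to $O(|q|^m)\to 0$. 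On the head $\ell<\ell^\ast$ there are only finitely many values of $\ell$; the inner $j$-sum is $O(m|x_\ell|^m)$ when $|x_\ell|\ge 1$ and uniformly bounded when $|x_\ell|<1$, so each of the finitely many head terms tends to $0$ as $m\to\infty$, either through the exponential factor $|q|^{m\ell}$ or through $\lambda^{m/2}$.

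The main obstacle is the regime $|q|\ge\sqrt\lambda$, where crude term-by-term bounds diverge: both $\lambda^{-\ell/2}$ and the inner geometric sums $\sum_j x_\ell^{\,j}$ blow up, so one must exploit the outer factor $\lambda^{m/2}$ to absorb the growth. This is the same balancing phenomenon already used in the proof of Lemma~\ref{lem:boundedness of sequence S_n and R_n} above.
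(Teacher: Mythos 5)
Your proposal is correct, and it takes a genuinely different route from the paper. You substitute $j=m-k$ so that the $k$-sum becomes a plain geometric series $\sum_{j=1}^m x_\ell^j$ with ratio $x_\ell=q^\ell\lambda^{-1/2}$, and you split the $\ell$-sum at a fixed threshold $\ell^\ast$ beyond which $|x_\ell|\le\tfrac12$; you then show the stronger statement that the inner double sum $a_m$ itself tends to $0$, after which the Ces\`aro mean theorem is immediate. The paper instead substitutes $j=k-\ell$ and splits the sum according to the \emph{sign} of $k-\ell$; it handles the $k<\ell$ piece with the bound $(m-k)(k-j)\ge mj$ (reusing the argument of \cite[Lemma 3.9]{BMRW}), and for the $k\ge\ell$ piece it interchanges the $m$-sum with the $(j,k)$-sums, summing the geometric series in $m$ first — a step that genuinely uses the averaging $\frac1n\sum_m$ and thus does not by itself establish $a_m\to 0$. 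So your decomposition is both different and a little more self-contained, since it never needs to reorganize the Ces\`aro sum; the paper's route has the advantage of recycling estimates already set up in \cite{BMRW}. One small remark on write-up: the paper reduces to $q\ge 0$ at the outset; your argument implicitly does the same by working with $|x_\ell|$ throughout, which is fine, but it is worth stating the reduction $s_n\le\frac1n\sum_m\sum_{\ell,k}|q|^{(m-k)\ell}\lambda^{(k-\ell)/2}$ explicitly, and in the head estimate the clean bound for $|x_\ell|>1$ is $O\bigl(|x_\ell|^m/(|x_\ell|-1)\bigr)$ rather than $O(m|x_\ell|^m)$ (the latter is only needed at $|x_\ell|=1$); both are of course more than enough once multiplied by $\lambda^{m/2}\lambda^{-\ell/2}$, since $\lambda^{m/2}|x_\ell|^m=|q|^{m\ell}$.
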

\begin{proof}
Assume that $q\geq0$. Write $s_n$ as follows:
\begin{align}
    s_n&\notag=\frac{1}{n}\sum_{m=1}^n\sum_{0\leq k<\ell\leq m}q^{(m-k)\ell}\lambda^{\frac{k-\ell}{2}}+\frac{1}{n}\sum_{m=1}^n\sum_{1\leq \ell\leq k\leq m-1}q^{(m-k)\ell}\lambda^{\frac{k-\ell}{2}}\\
    &\label{eq:s_n divided into two parts}=\frac{1}{n}\sum_{m=1}^n\sum_{j=-m}^{-1}\sum_{k=0}^{m+j}q^{(m-k)(k-j)}\lambda^{\frac{j}{2}}+\frac{1}{n}\sum_{m=1}^n\sum_{j=0}^{m-1}\sum_{k=j+1}^{m-1}q^{(m-k)(k-j)}\lambda^{\frac{j}{2}}
\end{align}
where we get the last equality by putting $j=k-\ell$. Now we estimate the first quantity in \eqref{eq:s_n divided into two parts}, where we follow the arguments as in the proof of \cite[Lemma 3.9]{BMRW}: 
\begin{align*}
    \frac{1}{n}\sum_{m=1}^n\sum_{j=-m}^{-1}\sum_{k=0}^{m+j}q^{(m-k)(k-j)}\lambda^{\frac{j}{2}}&=\frac{1}{n}\sum_{m=1}^n\sum_{j=1}^{m}\lambda^{-\frac{j}{2}}\sum_{k=0}^{m-j}q^{(m-k)(k+j)}\leq \frac{1}{n}\sum_{m=1}^n\sum_{j=1}^{m}\lambda^{-\frac{j}{2}}(m-j+1)q^{mj}.
\end{align*}
Choose $n_0$ large enough such that $q^{n_0}\lambda^{-\frac{1}{2}}\leq \frac{1}{2}$; hence for $m\geq n_0$ we have $q^m\lambda^{-\frac{1}{2}}\leq\frac{1}{2}q^{m-n_0}$ so that the above sum further yields
\begin{align*}
  \frac{1}{n}\sum_{m=1}^n\sum_{j=-m}^{-1}\sum_{k=0}^{m+j}q^{(m-k)(k-j)}\lambda^{\frac{j}{2}}&\leq \frac{1}{n}\sum_{m=1}^{n_0}\sum_{j=1}^{m}\lambda^{-\frac{j}{2}}(m-j+1)q^{mj} +\frac{1}{n}\sum_{m=n_0+1}^{n}\sum_{j=1}^{m}(m+1)\left(q^m\lambda^{-\frac{1}{2}}\right)^{j}\\
  &\leq\frac{ C(q,\lambda,n_0)}{n}+
   \frac{1}{n}\sum_{m=n_0+1}^n(m+1)\sum_{j=1}^m\left(\frac{q^{m-n_0}}{2}\right)^j
   \\
  & \leq\frac{ C(q,\lambda,n_0)}{n}+
   \frac{1}{n}\sum_{m=1}^n(m+1)\frac{q^{m-n_0}}{2-q^{m-n_0}}\\
  &\to0 \mbox{ as }n\to \infty,
\end{align*}
where $C(q,\lambda,n_0)$ is a constant independent from $n$, and where we have also used the fact that the sequence $\left\{(m+1)\frac{q^m}{2q^{n_0}-q^m}\right\}_{m\geq 1}$ converges to $0$ as $m\to\infty$ and hence its Ces\'aro sum also converges to $0$.
On the other hand,  the second quantity in \eqref{eq:s_n divided into two parts} is estimated as follows:
    \begin{align*}
       \frac{1}{n}\sum_{m=1}^n\sum_{j=0}^{m-1}\sum_{k=j+1}^{m-1}q^{(m-k)(k-j)}\lambda^{\frac{j}{2}} 
       &= \frac{1}{n}\sum_{j=0}^{n-1}\lambda^{\frac{j}{2}} \sum_{k=j+1}^{n-1}\sum_{m=k+1}^nq^{(m-k)(k-j)}\\
       &=\frac{1}{n}\sum_{j=0}^{n-1}\lambda^{\frac{j}{2}} \sum_{k=j+1}^{n-1}q^{-k(k-j)}\sum_{m=k+1}^nq^{m(k-j)}\\
       &= \frac{1}{n}\sum_{j=0}^{n-1}\lambda^{\frac{j}{2}} \sum_{k=j+1}^{n-1}q^{-k(k-j)}\left(\frac{q^{(k+1)(k-j)}-q^{(n+1)(k-j)}}{1-q^{k-j}}\right)\\
       & \leq \frac{1}{n}\sum_{j=0}^{n-1}\lambda^{\frac{j}{2}} \sum_{k=j+1}^{n-1}\frac{q^{k-j}}{1-q^{k-j}}\\
       &\leq \frac{1}{1-\lambda^{\frac{1}{2}}}\left(\frac{1}{n}\sum_{k=1}^{n}\frac{q^k}{1-q^k}\right)\\
       &\to 0 \mbox{ as }n\to\infty
    \end{align*}
     as  the sequence $\{\frac{q^k}{1-q^k}\}_{k\geq 1}$  converges to $0$ as $k\to\infty$, and so does its Ces\'aro sum. Thus we have proved that $s_n\to0$ as $n\to\infty$.
\end{proof}

\begin{prop}\label{prop:convergence of S_n=left wick formula}
Let $e,\bar{e}$ and $\lambda$ be as above. The sequence \[S_n=\frac{1}{n}\sum_{m=1}^n(1-q)^{2m}W(\Bar{e}^me^m)c(\Bar{e})^mc(e)^m\] is convergent in norm to the operator $S$ given by
\begin{equation}\label{eq:expression for S}
    S=\sum_{k=0}^\infty\frac{d_\infty}{d_k}(1-q)^{k}\;\lambda^{\frac{k}{2}}c(\Bar{e})^kTc(e)^k
\end{equation}
where  $T
$ is the norm limit of the operator $(1-q)^n\lambda^{\frac{n}{2}}c^*(e)^nc(e)^n$ which is  positive and invertible. Moreover, $S$ is invertible if $\lambda<\left(1+C_q^4\right)^{-2}$.
\end{prop}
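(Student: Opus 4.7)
The plan is to expand $W(\bar{e}^m e^m) c(\bar{e})^m c(e)^m$ via the Wick formula (Proposition \ref{prop:formula for Wick operators}), reduce the inner string using Corollary \ref{cor:expression for c*(bar e)^mc(bar e)^nc(e)^n} and the $q$-commutation relation, and then take the Ces\`aro average to isolate the contribution converging to $S$.

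I begin with the auxiliary operator $T$. Using Lemma \ref{lem:expression for c*(xi)mc(xi)n} in the case $m=1$ gives $c^*(e)c(e)^{n+1} = [n+1]_q\|e\|_U^2\, c(e)^n + q^{n+1}c(e)^{n+1} c^*(e)$; multiplying from the left by $c^*(e)^n$ and then by $(1-q)^{n+1}\lambda^{(n+1)/2}$ yields the recursion $T_{n+1} = (1-q^{n+1})T_n + q^{n+1}R_n$, where $R_n := (1-q)^{n+1}\lambda^{(n+1)/2}c^*(e)^n c(e)^{n+1} c^*(e)$ is uniformly bounded by Lemma \ref{lem:upper bound for c(xi^n)}. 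Hence $\|T_{n+1}-T_n\|=O(|q|^{n+1})$, so $T_n$ is norm Cauchy and converges to a positive operator $T$ with $\|T\|\leq C_q^2$. Each $T_n$ preserves every tensor level $\hu^{\otimes_q^j}$, so $T$ does too; direct computation gives $T e^{\otimes j}=(d_\infty/d_j)e^{\otimes j}$ and $T\xi=d_\infty\xi$ when $\xi$ is an elementary tensor in vectors of $\hu$ orthogonal to $e$. Combined with a perturbation argument from the exponential Cauchy rate, this yields $T\geq c_0 I$ for some $c_0>0$, hence invertibility.

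For the main reduction, Corollary \ref{cor:expression for c*(bar e)^mc(bar e)^nc(e)^n} with $\xi=\bar{e}$, $\eta=e$ and $\|\bar{e}\|_U^2=\lambda^{1/2}$ gives $c^*(\bar{e})^{m-\ell}c(\bar{e})^m c(e)^m = \frac{[m]_q!}{[\ell]_q!}\lambda^{(m-\ell)/2}c(\bar{e})^\ell c(e)^m + q^m X$; commuting $c^*(e)^{m-k}$ past $c(\bar{e})^\ell$ via the $q$-commutation relation picks up $q^{(m-k)\ell}$, and one writes $c^*(e)^{m-k}c(e)^m = (1-q)^{-(m-k)}\lambda^{-(m-k)/2} T_{m-k}\, c(e)^k$. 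Collecting scalars,
\[(1-q)^{2m}W(\bar{e}^m e^m)c(\bar{e})^m c(e)^m = \sum_{k,\ell=0}^m q_{k,\ell}^{(m)}\frac{d_m(1-q)^{k+\ell}}{d_\ell}\lambda^{(k-\ell)/2}q^{(m-k)\ell}\, c(\bar{e})^k c(e)^\ell c(\bar{e})^\ell T_{m-k}c(e)^k + E_m.\]
The error $E_m$, which carries $|q|^m$ from Corollary \ref{cor:expression for c*(bar e)^mc(bar e)^nc(e)^n}, is bounded by $|q|^m$ times $\sum_{k,\ell=0}^m|q|^{(m-k)\ell}\lambda^{(k-\ell)/2}$, uniformly bounded by the observation at the end of the proof of Lemma \ref{lem:boundedness of sequence S_n and R_n}, so $\|E_m\|=O(|q|^m)\to 0$. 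Split the sum at $\ell$: for $\ell=0$, $q_{k,0}^{(m)}=\binom{m}{k}_q=d_m/(d_kd_{m-k})$, so the $\ell=0$ part equals $\sum_{k=0}^m\frac{d_m^2}{d_kd_{m-k}}(1-q)^k\lambda^{k/2}c(\bar{e})^k T_{m-k}c(e)^k$, which converges in norm to $S$ by dominated convergence (using $T_{m-k}\to T$ and the geometric majorant $C_q^3\|T_{m-k}\|\lambda^{k/2}$ obtained via Lemma \ref{lem:upper bound for c(xi^n)}). For $\ell\geq 1$, using $|q_{k,\ell}^{(m)}|\leq C_q^2|q|^{(m-k)\ell}$ produces a total factor $|q|^{2(m-k)\ell}$, so the Ces\`aro average is dominated by $C'\cdot\frac{1}{n}\sum_{m=1}^n\sum_{\ell=1}^m\sum_{k=0}^m|q|^{2(m-k)\ell}\lambda^{(k-\ell)/2}$, which vanishes as $n\to\infty$ by Lemma \ref{lem:convergence of the sclar sequence s_n} applied with $q^2\in[0,1)$ in place of $q$.

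For invertibility of $S$ under $\lambda<(1+C_q^4)^{-2}$, write $S=d_\infty T+R$ with $R=\sum_{k\geq 1}\frac{d_\infty}{d_k}(1-q)^k\lambda^{k/2}c(\bar{e})^k T c(e)^k$. Lemma \ref{lem:upper bound for c(xi^n)} gives $\|c(\bar{e})^k\|\|c(e)^k\|\leq C_q^2(1-q)^{-k}$, and hence $\|R\|\leq d_\infty C_q^3\|T\|\cdot\lambda^{1/2}/(1-\lambda^{1/2})$. The stated condition on $\lambda$, equivalent to $\lambda^{1/2}/(1-\lambda^{1/2})<1/C_q^4$, is calibrated exactly so that $\|(d_\infty T)^{-1}R\|<1$; therefore $S=d_\infty T(I+(d_\infty T)^{-1}R)$ is invertible via Neumann series. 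The principal obstacles will be establishing the norm convergence and the uniform spectral lower bound of $T$, and maintaining tight control over the many scalar coefficients through the reduction step.
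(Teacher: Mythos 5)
Your overall strategy (Wick expansion, Corollary~\ref{cor:expression for c*(bar e)^mc(bar e)^nc(e)^n} reduction, Ces\`aro averaging, dominated convergence for the $\ell=0$ part, and a Neumann-series perturbation argument for invertibility) coincides with the paper's. Your reduction formula and the $\ell=0$ analysis are correct, and your self-contained recursion $T_{n+1}=(1-q^{n+1})T_n+q^{n+1}R_n$ for the convergence of $T_n$ is a nice alternative to citing \cite[Lemma 3.4]{BMRW} (though the ``perturbation argument'' for the uniform lower bound on $T$ is left vague).

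However, there is a genuine gap in your treatment of the $\ell\geq 1$ part. Your claimed scalar majorant
\[
\frac{1}{n}\sum_{m=1}^n\sum_{\ell=1}^m\sum_{k=0}^{m}|q|^{2(m-k)\ell}\lambda^{(k-\ell)/2}
\]
does \emph{not} vanish as $n\to\infty$, because the slice $k=m$ has $|q|^{2(m-k)\ell}=1$ and contributes $\frac{1}{n}\sum_{m=1}^n\sum_{\ell=1}^m\lambda^{(m-\ell)/2}$, which tends to the constant $\frac{1}{1-\sqrt{\lambda}}$. Lemma~\ref{lem:convergence of the sclar sequence s_n} applies only when $k$ ranges over $0\leq k\leq m-1$, precisely because $(m-k)\ell\geq 1$ is needed to harvest a $|q|$-factor; the $k=m$ boundary terms fall outside it. Concretely, for $k=m$ the operator $c(\bar e)^m c(e)^\ell c(\bar e)^\ell T_0\, c(e)^m$ has norm of order $(1-q)^{-m-\ell}$ and the scalar coefficient is of order $\frac{d_m}{d_\ell}(1-q)^{m+\ell}\lambda^{(m-\ell)/2}$ with $|q_{m,\ell}^{(m)}|\leq C_q^2$ giving no extra decay, so each such term has norm $\asymp\lambda^{(m-\ell)/2}$, and summing over $\ell$ gives a uniformly bounded, but not vanishing, contribution to the Ces\`aro average. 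This is exactly the operator the paper isolates as $Y_n$, and its convergence to $0$ requires a qualitatively different argument: one computes $Y_n^* Y_n$, splits into diagonal ($m=m'$) and off-diagonal ($m\neq m'$) terms, and uses $c^*(\bar e)^m c(\bar e)^{m'}c(e)^{\ell'}=q^{\ell'}X_{m,m',\ell'}$ (Corollary~\ref{cor:when m>n}) on the off-diagonal pieces to gain the extra decay, yielding $\|Y_n\|=O(n^{-1/2})$. Without this (or some equally non-obvious cancellation argument), your estimate for the $\ell\geq 1$ part does not close, and the proof as written is incomplete.
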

\begin{proof}
 By \cite[Lemma 3.4]{BMRW}, the operator $T$ exists  and  is both positive and invertible.
First check that the series defined in \eqref{eq:expression for S} above is norm convergent so that $S$ is a well-defined bounded operator. Indeed if we set $Z_k=\frac{d_\infty}{d_k}(1-q)^{k}\;\lambda^{\frac{k}{2}}c(\Bar{e})^kTc(e)^k$ for $k\geq 0$ then by using Lemma \ref{lem:bounded of sequence c(e)n} we have
\begin{align*}
\sum_{k=0}^\infty\|Z_k\|&\leq   \sum_{k=0}^\infty\;\frac{d_\infty}{d_k}(1-q)^{k}\;\lambda^{\frac{k}{2}}\|c(\Bar{e})^k\|\;\|T\|\;\|c(e)^k\|\\
   &\leq   d_\infty C_q\;\|T\|\sum_{k=0}^\infty(1-q)^{k}\;\lambda^{\frac{k}{2}} \left(C_q(1-q)^{-\frac{k}{2}}\lambda^{\frac{k}{4}}\right)\left(C_q(1-q)^{-\frac{k}{2}}\lambda^{-\frac{k}{4}}\right)\\
   &=d_\infty C_q^3\; \|T\|\sum_{k=0}^\infty \lambda^{\frac{k}{2}}\\
   &<\infty.
   \end{align*} 
To prove the convergence  of $S_n$, write using \eqref{eq:expression for W(bar e^ne^n)}
\begin{align*}
S_n=\frac{1}{n}\sum_{m=1}^n(1-q)^{2m}\sum_{k,\ell=0}^mq_{k,\ell}^{(m)}c(\Bar{e})^kc(e)^\ell c^*(e)^{m-k}c^*(\Bar{e})^{m-\ell}c(\Bar{e})^mc(e)^m
\end{align*}
where $q_{k,\ell}^{(m)}$ are the scalars as in the proof of Lemma \ref{lem:boundedness of sequence S_n and R_n} which satisfy $|q^{(m)}_{k,\ell}| \leq C_q^2|q|^{(m-k)\ell}$.
Let us understand the terms $c^*(\Bar{e})^{m-\ell}c(\Bar{e})^mc(e)^m$ in the sum above. Use Corollary \ref{cor:expression for c*(bar e)^mc(bar e)^nc(e)^n} to  write
\begin{align*}
   c^*(\Bar{e})^{m-\ell}c(\Bar{e})^mc(e)^m= \frac{[m]_q!}{[\ell]_q!}\lambda^{\frac{m-\ell}{2}}c(\bar{e})^{\ell}c(e)^m+q^mX_{m,\ell}
\end{align*}
where $X_{m,\ell}$ is an operator bounded by $(1-q)^{\frac{(-3m+\ell)}{2}}\lambda^{\frac{m-\ell}{4}}A_q$ for the constant $A_q:=\frac{C_q^6}{1-|q|}$ depending only on $q$. In particular, we get
\begin{align*}
    S_n&=
    \frac{1}{n}\sum_{m=1}^n(1-q)^{2m}\sum_{k,\ell=0}^mq_{k,\ell}^{(m)}\left(\frac{[m]_q!}{[\ell]_q!}\lambda^{\frac{m-\ell}{2}}\right)c(\Bar{e})^kc(e)^\ell c^*(e)^{m-k}c(\Bar{e})^{\ell}c(e)^m\\
    &\quad\quad\quad\quad+\frac{1}{n}\sum_{m=1}^nq^m(1-q)^{2m}\sum_{k,\ell=0}^mq_{k,\ell}^{(m)}c(\Bar{e})^kc(e)^\ell c^*(e)^{m-k}X_{m,\ell}.
\end{align*}
We now calculate the upper bound for the last sum above:
\begin{align*}
    \frac{1}{n}\sum_{m=1}^n|q|^m(1-q)^{2m}&\left\|\sum_{k,\ell=0}^mq_{k,\ell}^{(m)}c(\Bar{e})^kc(e)^\ell c^*(e)^{m-k}X_{m,\ell}\right\| \\
    &\leq \frac{1}{n}\sum_{m=1}^n|q|^m(1-q)^{2m}C_q^5A_q\sum_{k,\ell=0}^m|q|^{(m-k)\ell}\lambda^{\frac{k-\ell-(m-k)}{4}}(1-q)^{-\frac{(k+\ell+m-k)}{2}}(1-q)^{\frac{(-3m+\ell)}{2}}\lambda^{\frac{m-\ell}{2}}\\
    &=\frac{1}{n}C_q^5A_q\sum_{m=1}^n|q|^m\sum_{k,\ell=0}^m|q|^{(m-k)\ell}\lambda^{\frac{k-\ell}{2}}\\
    &\leq \frac{1}{n}\frac{C_q^5A_qB(q,\lambda)}{1-|q|}
\end{align*}
where $B(q,\lambda)$ is an upper bound for the sum $\sum_{k,\ell=0}^m|q|^{(m-k)\ell}\lambda^{\frac{k-\ell}{2}}$ such that $B(q,\lambda)$ depends only on $\lambda$ and $q$, and is independent of $m$ (see the proof of Lemma \ref{lem:boundedness of sequence S_n and R_n}). Thus we have
\begin{align*}
S_n&=
    \frac{1}{n}\sum_{m=1}^n(1-q)^{2m}\sum_{k,\ell=0}^mq_{k,\ell}^{(m)}\left(\frac{[m]_q!}{[\ell]_q!}\lambda^{\frac{m-\ell}{2}}\right)c(\Bar{e})^kc(e)^\ell c^*(e)^{m-k}c(\Bar{e})^{\ell}c(e)^m+O\left(\frac{1}{n}\right)\\
    &=X_n+Y_n+Z_n+O\left(\frac{1}{n}\right)
    \end{align*}
    where the above sum is divided over $\ell=0, k=m$ and the rest i.e. we write 
    \begin{align*}
    &X_n=\frac{1}{n}\sum_{m=1}^n(1-q)^{2m}\sum_{k=0}^mq_{k,0}^{(m)}[m]_q!\lambda^{\frac{m}{2}}c(\Bar{e})^k c^*(e)^{m-k}c(e)^m\\
    &Y_n=\frac{1}{n}\sum_{m=1}^n(1-q)^{2m}\sum_{\ell=0}^mq_{m,\ell}^{(m)}\frac{[m]_q!}{[\ell]_q!}\lambda^{\frac{m-\ell}{2}}c(\Bar{e})^mc(e)^\ell c(\Bar{e})^\ell c(e)^m\\
    &Z_n= \frac{1}{n}\sum_{m=1}^n(1-q)^{2m}\sum_{\ell=1}^m\sum_{k=0}^{m-1}q_{k,\ell}^{(m)}\left(\frac{[m]_q!}{[\ell]_q!}\lambda^{\frac{m-\ell}{2}}\right)c(\Bar{e})^kc(e)^\ell c^*(e)^{m-k}c(\Bar{e})^{\ell}c(e)^m.
\end{align*}
But the norm of $Z_n$ can be estimated using Lemma \ref{lem:upper bound for c(xi^n)} as follows:
\begin{align*}
    \|Z_n\|&\leq \frac{1}{n}\sum_{m=1}^n(1-q)^{2m}\sum_{\ell=1}^m\sum_{k=0}^{m-1}C_q^2|q|^{(m-k)\ell}\left(\frac{d_m}{d_\ell}\lambda^{\frac{m-\ell}{2}}\right)C_q^5(1-q)^{-2m}\lambda^{\frac{k-m}{2}}\\
    &\leq \frac{C_q^9}{n}\sum_{m=1}^n\sum_{\ell=1}^n\sum_{k=0}^{m-1}|q|^{(m-k)\ell}\lambda^{\frac{k-\ell}{2}}\\
    &= C_q^9s_n
   \end{align*}
 where $s_n$ is the sequence as in Lemma \ref{lem:convergence of the sclar sequence s_n}; hence $Z_n\to0$ as $n\to\infty$ by the same Lemma. We now show that $Y_n$
converges to 0 as $n\to\infty$. We follow the same trick as in the proof of Proposition \ref{prop:limit of cesaro sum of C(xi)^kc(eta)^kT_k}. Calculate
\begin{align*}
    Y_n^*Y_n&=\frac{1}{n^2}\sum_{m,m'=1}^n(1-q)^{2m+2m'}\\
    &\cdot\sum_{\ell=0}^m\sum_{\ell'=0}^{m'}q_{m,\ell}^{(m)}q_{m',\ell'}^{(m')}\frac{[m]_q![m']_q}{[\ell]_q![\ell']_q!}\lambda^{\frac{m+m'-\ell-\ell'}{2}}c^*(e)^mc^*(\Bar{e})^\ell c^*(e)^\ell c^*(\bar{e})^{m}c(\Bar{e})^{m'}c(e)^{\ell'} c(\bar{e})^{\ell'} c(e)^{m'}\\
    &=W^n_{m=m'}+W^n_{m>m'}+W^n_{m<m'}
\end{align*}
where we divide the sum over three cases: $m=m', m<m'$ and $m>m'$.  Over $m=m'$, we again use some similar estimates to calculate the following:
\begin{align*}\|W_{m=m'}^n\|&= \frac{1}{n^2}\left\|\sum_{m=1}^n(1-q)^{4m}\sum_{\ell,\ell'=0}^mq_{m,\ell}^{(m)}q_{m,\ell'}^{(m)}\frac{([m]_q!)^2}{[\ell]_q![\ell']_q!}\lambda^{\frac{2m-\ell-\ell'}{2}}c^*(e)^mc^*(\Bar{e})^\ell c^*(e)^\ell c^*(\bar{e})^{m}c(\Bar{e})^{m}c(e)^{\ell'} c(\bar{e})^{\ell'} c(e)^{m}\right\|\\
&\leq \frac{C_q^{12}}{n^2}\sum_{m=1}^n(1-q)^{4m}\sum_{\ell,\ell'=0}^m\frac{d_m^2(1-q)^{-2m}}{d_\ell d_{\ell'}(1-q)^{-\ell-\ell'}}\lambda^{\frac{2m-\ell-\ell'}{2}}(1-q)^{-2m-\ell-\ell'}\quad\quad\quad\quad(\text{since }q_{m,\ell}^{(m)}\leq C_q^2)\\
&\leq \frac{C_q^{16}}{n^2}\sum_{m=1}^n\sum_{\ell,\ell'=0}^m\lambda^{\frac{2m-\ell-\ell'}{2}}\\
&\leq\frac{1}{n} \frac{C_q^{16}}{(1-\sqrt{\lambda})^2}.
\end{align*}
Next consider the sum over  $m>m'$ (which has the same norm as the part $m<m'$ by considering the adjoint and the symmetric nature of the sum). For $m>m'$, use Corollary \ref{cor:when m>n} to write
\[c^*(\bar{e})^{m}c(\Bar{e})^{m'}c(e)^{\ell'}=q^{\ell'}X_{m,m',\ell'}\] where $X_{m,m',\ell'}$ is an operator bounded by $A_q(1-q)^{-\frac{(m+m'+\ell')}{2}}\lambda^{\frac{m+m'-\ell'}{4}}$ for some constant $A_q$. Therefore, the sum over $m>m'$ is estimated as
\begin{align*}
 &\|W^n_{m>m'}\|\\
 &=\frac{1}{n^2}\left\|\sum_{n\geq m>m'\geq 1}^n(1-q)^{2m+2m'}\sum_{\ell=0}^m\sum_{\ell'=0}^{m'}q_{m,\ell}^{(m)}q_{\ell',m'}^{(m')}\frac{[m]_q![m']_q}{[\ell]_q![\ell']_q!}\lambda^{\frac{m+m'-\ell-\ell'}{2}}c^*(e)^mc^*(\Bar{e})^\ell c^*(e)^\ell \left(q^{\ell'}X_{m,m',\ell'}\right) c(\bar{e})^{\ell'} c(e)^{m'}\right\|\\ 
    &\leq \frac{C_q^9A_q}{n^2}\sum_{n\geq m>m'\geq1}(1-q)^{2m+2m'}\sum_{\ell=0}^m\sum_{\ell'=0}^{m'}|q|^{\ell'}\frac{d_md_{m'}(1-q)^{-m-m'}}{d_\ell d_{\ell'}(1-q)^{-\ell-\ell'}}\lambda^{\frac{m+m'-\ell-\ell'}{2}}(1-q)^{-m-m'-\ell-\ell'}\quad(\text{as }q_{m,\ell}^{(m)}\leq C_q^2)\\\
    & \leq \frac{C_q^{13}A_q}{n^2}\sum_{m,m'=1}^n\sum_{\ell=0}^m\sum_{\ell'=0}^{m'}|q|^{\ell'}\lambda^{\frac{m+m'-\ell-\ell'}{2}}\\
    &\leq \frac{C_q^{13}A_q}{n^2}\frac{n}{1-\sqrt{\lambda}}\sum_{m'=1}^n\sum_{\ell'=0}^{m'}|q|^{\ell'}\lambda^{\frac{m'-\ell'}{2}}\\
&\leq \frac{C_q^{13}A_q}{n(1-\sqrt{\lambda})}\sum_{m'=1}^n\left(\sum_{\ell'=0}^{\frac{m}{2}}|q|^{\ell'}\lambda^{\frac{m'-\ell'}{2}}+\sum_{\ell'=\frac{m'}{2}}^m|q|^{\ell'}\lambda^{\frac{m'-\ell'}{2}}\right)\\
&\leq \frac{C_q^{13}A_q}{n(1-\sqrt{\lambda})}\left(\sum_{m'=1}^n\lambda^{\frac{m'}{2}}\sum_{\ell'=0}^{\frac{m}{2}}|q|^{\ell'}+\sum_{m'=1}^n|q|^{\frac{m'}{2}}\sum_{\ell'=\frac{m'}{2}}^m\lambda^{\frac{m'-\ell'}{2}}\right)\\
   &\leq \frac{C_q^{13}A_q}{n(1-\sqrt{\lambda})}\left(\frac{1}{(1-\sqrt{\lambda})(1-|q|)}+\frac{1}{(1-\sqrt{|q|})(1-\sqrt{\lambda})}\right)\\
   &\leq \frac{1}{n}\frac{2C_q^{13}A_q}{(1-\sqrt{\lambda})^2(1-\sqrt{|q|})}.
\end{align*}
A similar upper bound can be given for $\|W_{m<m'}^n\|$. Thus we have shown that 
\begin{align*}
    \|Y_n^*Y_n\|\leq \frac{1}{n}\left(\frac{C_q^{16}}{(1-\sqrt{\lambda})^2}+\frac{4C_q^{13}A_q}{(1-\sqrt{\lambda})^2(1-\sqrt{|q|})}\right)
\end{align*}
and hence $\lim_{n\to\infty}Y_n=0$ in norm.
 Finally, we conclude that 
\begin{align*}
    \lim_{n\to\infty}S_n&=\lim_{n\to\infty}X_n=\lim_{n\to\infty}\frac{1}{n}\sum_{m=1}^n(1-q)^{2m}\sum_{k=0}^mq_{k,0}^{(m)}[m]_q!\lambda^{\frac{m}{2}}c(\Bar{e})^k c^*(e)^{m-k}c(e)^m\\
    &=\lim_{n\to\infty}\frac{1}{n}\sum_{m=1}^n\sum_{k=0}^mq_{k,0}^{(m)}d_m(1-q)^k\lambda^{\frac{k}{2}}c(\Bar{e})^k T_{m-k}c(e)^k
\end{align*}
where $T_p=(1-q)^p\lambda^{\frac{p}{2}}c^*(e)^p c(e)^p$ for any $p\geq 1$. Let $T=\lim_{p\to\infty}T_p$ in norm (which exists by \cite[Lemma 3.4]{BMRW} as mentioned above). Also for $0\leq k\leq m$, it is straightforward to verify that $q_{k,0}^{(m)}=\begin{pmatrix}m\\k
        \end{pmatrix}_q$ (see for example, \cite[Corollary 2.8]{BKS}); so that for each $k\geq0$, we have
    \[\lim_{m\to\infty}q_{k,0}^{(m)}=\lim_{m\to\infty}\begin{pmatrix}m\\k
        \end{pmatrix}_q=\lim_{m\to\infty}{\frac{d_m}{d_kd_{m-k}}}=\frac{1}{d_k}.\]
        Finally, since $\lim_{m\to\infty}d_m=d_\infty$ and $\lim_{m\to\infty}T_{m-k}=T$ for all fixed $k\geq 1$, we use dominated convergence theorem to conclude that
        \begin{align*}
            \lim_{m\to\infty}\sum_{k=0}^mq_{k,0}^{(m)}d_m(1-q)^k\lambda^{\frac{k}{2}}c(\Bar{e})^k T_{m-k}c(e)^k=\sum_{k=0}^\infty \frac{d_\infty}{d_k}(1-q)^k\lambda^{\frac{k}{2}}c(\Bar{e})^k Tc(e)^k.
        \end{align*}
        In particular, the Ces\'aro sum of the above sequence has the same limit; hence we get
        \begin{align*}
            \lim_{n\to\infty}S_n=\lim_{n\to\infty}\frac{1}{n}\sum_{m=1}^n\sum_{k=0}^mq_{k,0}^{(m)}d_m(1-q)^k\lambda^{\frac{k}{2}}c(\Bar{e})^k T_{m-k}c(e)^k=\sum_{k=0}^\infty \frac{d_\infty}{d_k}(1-q)^k\lambda^{\frac{k}{2}}c(\Bar{e})^k Tc(e)^k=S.
        \end{align*}
       Finally, to prove the invertibility of $S$, we note that
       \begin{align*}
           d_{\infty}^{-1}T^{-1}S-I=\sum_{k=1}^\infty \frac{1}{d_k}(1-q)^k\lambda^{\frac{k}{2}}T^{-1}c(\Bar{e})^k Tc(e)^k
       \end{align*}
    so that
       \begin{align*}
           \|d_\infty^{-1}T^{-1}S-I\|\leq C_q^2\sum_{k=1}^\infty\frac{1}{d_k}\lambda^{\frac{k}{2}}\|T^{-1}\|\;\|T\|&\leq C_q^3\|T^{-1}\|\;\|T\|\frac{\lambda^{\frac{1}{2}}}{{1-\lambda^{\frac{1}{2}}}}\leq C_q^4\frac{\lambda^{\frac{1}{2}}}{{1-\lambda^{\frac{1}{2}}}}
       \end{align*}
       where we have used the fact that $\|T^{-1}\|\;\|T\|\leq \frac{1}{d_\infty}\leq C_q$ from \cite[Lemma 3.4]{BMRW}.
       In particular, $d_\infty^{-1}T^{-1}S$ is invertible if $C_q^4\frac{\lambda^{\frac{1}{2}}}{{1-\lambda^{\frac{1}{2}}}}<1$, which is satisfied by the given value of $\lambda$ in the hypothesis.
\end{proof}

\begin{rem}\label{rem:estimate for the norm of S{-1}} 
 1. Since $\|d_\infty^{-1}T^{-1}S-1\|\leq C_q^4\frac{\sqrt{\lambda}}{1-\sqrt{\lambda}}$, we have  that
 \begin{align*}
     \|d_\infty TS^{-1}\|=\|(I-(I-d_\infty^{-1} T^{-1}S))^{-1}\|\leq \frac{1}{1-\|I-d_\infty^{-1}T^{-1}S\|}\leq \frac{1}{1-C_q^4\frac{\sqrt{\lambda}}{1-\sqrt{\lambda}}}=\frac{1-\sqrt{\lambda}}{1-(1+C_q^4)\sqrt{\lambda}}
 \end{align*}
 and since $T^{-1}\leq 2d_\infty^{-1}\leq 2C_q$ (\cite[Lemma 3.4]{BMRW}), we must have
 \begin{align*}
     \|S^{-1}\|\leq \|d_\infty^{-1}T^{-1}\|\;\|d_\infty TS^{-1}\|\leq \frac{2C_q^2(1-\sqrt{\lambda})}{1-(1+C_q^4)\sqrt{\lambda}}.
 \end{align*}
 2. Although we have shown that the Ces\'aro sum of the sequence $\{W(\bar{e}^me^m)c(\Bar{e})^mc(e)^m\}_{m\geq 1}$  converges, the sequence itself does not converge. For example, this can be checked for $q=0$ where the main obstruction is that the sequence $\{\sum_{\ell=0}^mc(\Bar{e})^mc(e)^\ell c^*(\Bar{e})^{m-\ell}c(\bar{e})^mc(e)^m\}_{m\geq 1}$ is not convergent. This is one very important reason for us to consider Ces\'aro sum of such bounded sequence.
\end{rem}

We now prove some convergence results involving both the left and right creation/annihilation operators. For this purpose, we introduce the  operator $Q$ defined by
\begin{align*}
Q=\sum_{n=0}^\infty q^nP_n
\end{align*}
where $P_n$ denotes the projection in $B(\fqh)$ onto the $n$th particle space $\hu^{\otimes_q^n}$, and the sum above converges in norm.  Note that for any $f\in \hu^{\otimes_q^n}$, we have $Qf=q^nf$. The following is an easy verification
\begin{align}\label{eq: commutation relation between r*(xi) and c(eta)}
r^*(\xi)c(\eta)=\langle \xi,\eta\rangle_UQ+c(\eta)r^*(\xi)
\end{align}
for any $\xi,\eta\in\hu$. In particular, $r^*(\xi)$ and $c(\eta)$ commute if $\xi$ and $\eta$ are orthogonal. Moreover, we have
\begin{align}\label{eq:commutation between Q and c(xi) and r(xi)}
Qc(\xi)=qc(\xi)Q,\quad Qr(\xi)=qr(\xi)Q
\end{align}

\begin{lem}\label{lem:expression for r*(xi)^mc(eta)^n}
For any $\xi,\eta\in \hu$ and $m,n\in\mathbb{N}$, we have the following:
\begin{align*}
r^*(\xi)^mc(\eta)^n=\sum_{k=0}^{\min\{m,n\}}\begin{pmatrix}n\\k
        \end{pmatrix}_q\frac{[m]_q!}{[m-k]_q!}\langle\xi,\eta\rangle_U^kc(\eta)^{n-k}Q^kr^*(\xi)^{m-k}.
\end{align*}
\end{lem}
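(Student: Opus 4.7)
The plan is to proceed by induction on $m$, bootstrapping from the base case $m=1$, which is itself proved by a small induction on $n$.

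First, I will establish the auxiliary identity
\[
r^*(\xi)c(\eta)^n = [n]_q\langle\xi,\eta\rangle_U\, c(\eta)^{n-1}Q + c(\eta)^n r^*(\xi),
\]
by induction on $n$ using the commutation relation \eqref{eq: commutation relation between r*(xi) and c(eta)} and the rule $Qc(\eta)=qc(\eta)Q$ from \eqref{eq:commutation between Q and c(xi) and r(xi)}. The induction step reduces to the scalar identity $1+q[n]_q=[n+1]_q$, which is immediate. This handles the case $m=1$ of the target formula.

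Next, I will also need the identity $r^*(\xi)Q^k = q^k Q^k r^*(\xi)$, obtained by taking the adjoint of $Qr(\xi)=qr(\xi)Q$ (using that $Q$ is self-adjoint and $q\in\mathbb{R}$) and iterating.

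With these tools in hand, I will induct on $m$. The base case $m=0$ is trivial (only the $k=0$ term survives). For the induction step, I will write $r^*(\xi)^{m+1}c(\eta)^n = r^*(\xi)\cdot r^*(\xi)^m c(\eta)^n$, apply the induction hypothesis, and then commute the outermost $r^*(\xi)$ past $c(\eta)^{n-k}$ using the auxiliary identity, and past $Q^k$ using the $q^k$-commutation. This yields two sums; reindexing the first by $k\mapsto k+1$ allows me to combine them term-by-term. After this reindexing, the total coefficient of $\langle\xi,\eta\rangle_U^k c(\eta)^{n-k}Q^k r^*(\xi)^{m+1-k}$ becomes
\[
\binom{n}{k}_q\frac{[m]_q!}{[m+1-k]_q!}\Bigl(q^k[m+1-k]_q + [k]_q\Bigr),
\]
after using the standard $q$-binomial identity $\binom{n}{k-1}_q[n-k+1]_q = [k]_q\binom{n}{k}_q$. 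The scalar combinatorial identity $q^k[m+1-k]_q + [k]_q = [m+1]_q$ (a telescoping of geometric sums) then collapses this to exactly $\binom{n}{k}_q\frac{[m+1]_q!}{[m+1-k]_q!}$, completing the induction.

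The main obstacle is bookkeeping: verifying that the range $0\leq k\leq \min\{m+1,n\}$ comes out correctly after reindexing, and that boundary terms (where either $c(\eta)^{n-k-1}$ would be undefined or the $q$-binomial $\binom{n}{k-1}_q$ is vacuous) vanish automatically because of the factors $[n-k]_q$ or $[m-k+1]_q$ that appear in the numerators. Once those edge cases are handled, the rest is the pair of $q$-combinatorial identities noted above.
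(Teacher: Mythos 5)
Your proposal is correct and follows essentially the same route as the paper's proof: establish the $m=1$ case $r^*(\xi)c(\eta)^n = [n]_q\langle\xi,\eta\rangle_U c(\eta)^{n-1}Q + c(\eta)^n r^*(\xi)$ by induction on $n$, then induct on $m$ using the $q$-commutations $r^*(\xi)Q^k = q^kQ^kr^*(\xi)$ and $Qc(\eta)=qc(\eta)Q$, and close with the same pair of $q$-combinatorial identities ($\binom{n}{k-1}_q[n-k+1]_q=[k]_q\binom{n}{k}_q$ and $q^k[m+1-k]_q+[k]_q=[m+1]_q$, the latter being the paper's $q^k[m-k]_q+[k]_q=[m]_q$ shifted by one). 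The only cosmetic differences are that you keep $\langle\xi,\eta\rangle_U$ explicit rather than normalizing it to $1$, and you peel $c(\eta)$ off the right rather than the left in the base-case induction; neither affects the substance.
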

\begin{proof}
We may and we will assume that $\langle\xi,\eta\rangle_U\neq 0$, otherwise there is nothing to prove since  $r^*(\xi)c(\eta)=c(\eta)r^*(\xi)$ whenever $\langle\xi,\eta\rangle_U=0$. By replacing $\xi$ with $\frac{\xi}{\langle\eta,\xi\rangle_U}$ (or $\eta$ with $\frac{\eta}{\langle\xi,\eta\rangle_U})$, we will further assume that $\langle\xi,\eta\rangle_U=1$.

 First, we prove the expression for $r^*(\xi)c(\eta)^n$ by induction on $n$. Indeed for $n=1$, the expression follows from \eqref{eq: commutation relation between r*(xi) and c(eta)}. Assuming this to be true for $n-1$, we again use  \eqref{eq: commutation relation between r*(xi) and c(eta)} and \eqref{eq:commutation between Q and c(xi) and r(xi)}  to  obtain
\begin{equation}\label{eq:formula for r*(xi)c(eta)^n}
\begin{aligned}
r^*(\xi)c(\eta)^n&=\left(r^*(\xi)c(\eta)\right)c(\eta)^{n-1}=Qc(\eta)^{n-1}+c(\eta)(r^*(\xi)c(\eta)^{n-1})\\
&=q^{n-1}c(\eta)^{n-1}Q+c(\eta)\left(c(\eta)^{n-1}r^*(\xi)+[n-1]_qc(\eta)^{n-2}Q\right)\\
&=c(\eta)^nr^*(\xi)+[n]_qc(\eta)^{n-1}Q
\end{aligned}
\end{equation}
as required. Now we prove the result for general $m$ and $n$ such that $n\geq m$. The case $n<m$ follows similarly. So now we fix $n$.   The proof proceeds by induction on $m$.  For $m=1$, the  expression is proved above. By assuming the expression to be true for $m-1$, we calculate
\begin{align*}
r^*(\xi)^mc(\eta)^n&=r^*(\xi)(r^*(\xi)^{m-1}c(\eta)^n)\\
&=r^*(\xi)\sum_{k=0}^{m-1}\begin{pmatrix}n\\k
        \end{pmatrix}_q\frac{[m-1]_q!}{[m-1-k]_q!}c(\eta)^{n-k}Q^kr^*(\xi)^{m-1-k}\\
        &=\sum_{k=0}^{m-1}\begin{pmatrix}n\\k
        \end{pmatrix}_q\frac{[m-1]_q!}{[m-1-k]_q!}\left(c(\eta)^{n-k}r^*(\xi)+[n-k]_qc(\eta)^{n-k-1}Q\right)Q^kr^*(\xi)^{m-1-k}\\
        &\hspace{3in}(\text{using the expression for } r^*(\xi)c(\eta)^{n-k}\text{ from }\eqref{eq:formula for r*(xi)c(eta)^n})\\
        &=\sum_{k=0}^{m-1}\begin{pmatrix}n\\k
        \end{pmatrix}_q\frac{[m-1]_q!}{[m-1-k]_q!}q^kc(\eta)^{n-k}Q^kr^*(\xi)^{m-k}\\
        &\quad\quad+\sum_{k=0}^{m-1}\begin{pmatrix}n\\k
        \end{pmatrix}_q\frac{[m-1]_q!}{[m-1-k]_q!}[n-k]_qc(\eta)^{n-k-1}Q^{k+1}r^*(\xi)^{m-1-k}\\
       &= \sum_{k=0}^{m-1}\begin{pmatrix}n\\k
        \end{pmatrix}_q\frac{[m-1]_q!}{[m-1-k]_q!}q^kc(\eta)^{n-k}Q^{k}r^*(\xi)^{m-k}\\
        &\quad\quad+\sum_{k=1}^{m}\begin{pmatrix}n\\k-1
        \end{pmatrix}_q\frac{[m-1]_q!}{[m-k]_q!}[n-k+1]_qc(\eta)^{n-k}Q^{k}r^*(\xi)^{m-k}\\&\hspace{3in} (\text{using }r^*(\xi)Q^k=q^kQ^kr^*(\xi) \text{ from }\eqref{eq:commutation between Q and c(xi) and r(xi)})\\
        &=c(\eta)^nr^*(\xi)^m+\sum_{k=1}^{m-1}\left(\begin{pmatrix}n\\k
        \end{pmatrix}_q\frac{[m-1]_q!}{[m-1-k]_q!}q^k+\begin{pmatrix}n\\k-1
        \end{pmatrix}_q\frac{[m-1]_q!}{[m-k]_q!}[n-k+1]_q\right)c(\eta)^{n-k}Q^{k}r^*(\xi)^{m-k}\\
       & \quad\quad\quad+\begin{pmatrix}n\\m-1
        \end{pmatrix}_q[m-1]_q![n-m+1]_qc(\eta)^{n-m}Q^m\\
        &=\sum_{k=0}^{m}\begin{pmatrix}n\\k
        \end{pmatrix}_q\frac{[m]_q!}{[m-k]_q!}c(\eta)^{n-k}Q^kr^*(\xi)^{m-k}
\end{align*}
where to get the last expression, we have used the following calculation:
\begin{align*}
\begin{pmatrix}n\\k
        \end{pmatrix}_q\frac{[m-1]_q!}{[m-1-k]_q!}q^k&+\begin{pmatrix}n\\k-1
        \end{pmatrix}_q\frac{[m-1]_q!}{[m-k]_q!}[n-k+1]_q\\
        &=\frac{[n]_q!}{[n-k]_q![k-1]_q!}\frac{[m-1]_q!}{[m-1-k]_q!}\left(\frac{q^k}{[k]_q}+\frac{1}{[m-k]_q}\right)\\
        &=\frac{[n]_q!}{[n-k]_q![k-1]_q!}\frac{[m-1]_q!}{[m-1-k]_q!}\left(\frac{q^k[m-k]_q+[k]_q}{[k]_q[m-k]_q}\right)\\
        &=\frac{[n]_q!}{[n-k]_q![k-1]_q!}\frac{[m-1]_q!}{[m-1-k]_q!}\left(\frac{[m]_q}{[k]_q[m-k]_q}\right)\\
        &=\begin{pmatrix}n\\k
        \end{pmatrix}_q\frac{[m]_q!}{[m-k]_q!}.
\end{align*}
This completes the proof.
\end{proof}

The following proposition  can be considered a stronger version of \cite[Lemma 3.11]{BMRW} in the sense of norm convergence  replacing the weak convergence.

\begin{prop}\label{prop: convergence of r^*(xi)r^*(eta)c(xi)c(eta)}
For any non-zero vectors $\xi,\eta\in \hu$ with $\langle\xi,\eta\rangle_U=0$, 
    we have $\lim_{n\to\infty}R_n=d_\infty^2 P_{\Omega}$ in norm, where $R_n$ is the  operator \[\frac{1}{n}\sum_{m=1}^n(1-q)^{2m} \|\xi\|_U^{-2m}\|\eta\|_U^{-2m} r^*(\xi)^mr^*(\eta)^mc(\xi)^mc(\eta)^m\]
   and $P_\Omega$ denotes the projection onto the vacuum space $\C\Omega$.
\end{prop}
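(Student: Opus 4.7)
The plan is to expand $r^*(\xi)^m r^*(\eta)^m c(\xi)^m c(\eta)^m$ into a sum of normal-form monomials and identify precisely which terms survive the Ces\`aro average. After rescaling to $\|\xi\|_U=\|\eta\|_U=1$, the orthogonality $\langle\xi,\eta\rangle_U=0$ together with \eqref{eq: commutation relation between r*(xi) and c(eta)} gives $r^*(\eta)^m c(\xi)^m=c(\xi)^m r^*(\eta)^m$, so the product factors as $(r^*(\xi)^m c(\xi)^m)(r^*(\eta)^m c(\eta)^m)$. I would then apply Lemma \ref{lem:expression for r*(xi)^mc(eta)^n} to each factor, multiply out, and use $r^*(\xi)c(\eta)=c(\eta)r^*(\xi)$ together with the $Q$-commutations $Qc(\eta)=qc(\eta)Q$ and $r^*(\xi)Q=qQr^*(\xi)$ from \eqref{eq:commutation between Q and c(xi) and r(xi)} to bring every summand into the normal form
\[
c(\xi)^{m-k_1}\,c(\eta)^{m-k_2}\,Q^{k_1+k_2}\,r^*(\xi)^{m-k_1}\,r^*(\eta)^{m-k_2}
\]
with scalar coefficient $\alpha_{k_1}^{(m)}\alpha_{k_2}^{(m)}q^{k_1(m-k_2)+k_2(m-k_1)}$, where $\alpha_k^{(m)}:=\binom{m}{k}_q\frac{[m]_q!}{[m-k]_q!}=\frac{d_m^2(1-q)^{-k}}{d_k\,d_{m-k}^2}$.

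Writing $T_m:=(1-q)^{2m}r^*(\xi)^m r^*(\eta)^m c(\xi)^m c(\eta)^m$, I split the expansion of $T_m$ into three pieces. The $(k_1,k_2)=(m,m)$ term simplifies (using $\alpha_m^{(m)}=[m]_q!$ and the vanishing $q$-exponent) to exactly $d_m^2 Q^{2m}$; since $\|Q^{2m}-P_\Omega\|=|q|^{2m}\to0$ and $d_m\to d_\infty$, this piece converges in norm to $d_\infty^2 P_\Omega$, and so does its Ces\`aro mean. The $(k_1,k_2)=(0,0)$ term equals $(1-q)^m c(\xi)^m c(\eta)^m\cdot T'_m$ with $T'_m:=(1-q)^m r^*(\xi)^m r^*(\eta)^m$; Corollary \ref{lem:bounded of sequence c(e)n} shows $\{T'_m\}$ is uniformly bounded (by $C_q^2$), so Proposition \ref{prop:limit of cesaro sum of C(xi)^kc(eta)^kT_k} directly yields that its Ces\`aro mean converges to $0$ in norm.

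For the remaining pairs $(k_1,k_2)\in\{0,\ldots,m\}^2\setminus\{(0,0),(m,m)\}$, the key arithmetic observation is
\[
k_1(m-k_2)+k_2(m-k_1) \;=\; m(k_1+k_2)-2k_1 k_2 \;\geq\; m:
\]
for boundary cases with one index in $\{0,m\}$ the sum collapses to $mk_j$ or $m(m-k_j)\geq m$; for the interior case $1\leq k_1,k_2\leq m-1$, one checks $(m-k_2)k_1+(m-k_1)k_2\geq\max(k_1+k_2,\,2m-k_1-k_2)\geq m$ (the two inner quantities sum to $2m$). Combined with the explicit formula for $\alpha_k^{(m)}$ and the uniform bounds $\|c(\zeta)^a\|,\|r^*(\zeta)^a\|\leq C_q(1-q)^{-a/2}$ from Corollary \ref{lem:bounded of sequence c(e)n}, the $(1-q)^{2m}$ prefactor cancels all the $(1-q)^{-a/2}$ contributions, so each remaining summand of $T_m$ is bounded in norm by a constant depending only on $q$ times $|q|^m$. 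Since there are at most $(m+1)^2$ such pairs, the total contribution is $O(m^2|q|^m)\to0$ in norm as $m\to\infty$, and hence its Ces\`aro mean also vanishes.

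The main obstacle is the third step: accurately bookkeeping the scalar prefactor $(1-q)^{2m}\alpha_{k_1}^{(m)}\alpha_{k_2}^{(m)}$ against the operator norm of the normal-form monomial, and verifying the $q$-exponent lower bound in the interior case. Once these estimates are in place, combining the three pieces gives $R_n\to d_\infty^2 P_\Omega$ in norm.
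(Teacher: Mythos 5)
Your proposal is correct and follows essentially the same route as the paper's proof: factor $r^*(\xi)^m r^*(\eta)^m c(\xi)^m c(\eta)^m$ as $(r^*(\xi)^m c(\xi)^m)(r^*(\eta)^m c(\eta)^m)$ using orthogonality, expand each factor via Lemma \ref{lem:expression for r*(xi)^mc(eta)^n}, normal-order with the $Q$-commutations, split into the $(m,m)$ term (giving $d_m^2 Q^{2m}\to d_\infty^2 P_\Omega$), the $(0,0)$ term (handled by Proposition \ref{prop:limit of cesaro sum of C(xi)^kc(eta)^kT_k}), and the remainder (controlled by $m(k_1+k_2)-2k_1 k_2\ge m$, giving an $O(m^2|q|^m)$ bound). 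Your justification of that inequality via noting that $k_1+k_2$ and $2m-k_1-k_2$ sum to $2m$ is a clean alternative to the paper's case analysis, but the overall argument and bookkeeping are the same.
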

\begin{proof}
Assume without loss of generality that $\xi,\eta$ are unit vectors. First of all, note that $r^*(\eta)$ and $c(\xi)$ commute as $\langle\xi,\eta\rangle_U=0$, so we can also write
\begin{align*}
  R_n=  \frac{1}{n}\sum_{m=1}^n(1-q)^{2m}  r^*(\xi)^mc(\xi)^mr^*(\eta)^mc(\eta)^m.
\end{align*}
Use Lemma \ref{lem:expression for r*(xi)^mc(eta)^n} to expand $R_n$ as follows:
\begin{align*}
R_n&=\frac{1}{n}\sum_{m=1}^n(1-q)^{2m}\left(\sum_{k=0}^{m}\begin{pmatrix}m\\k
        \end{pmatrix}_q\frac{[m]_q!}{[m-k]_q!}c(\xi)^{m-k}Q^kr^*(\xi)^{m-k}\right)\left(\sum_{\ell=0}^{m}\begin{pmatrix}m\\\ell
        \end{pmatrix}_q\frac{[m]_q!}{[m-\ell]_q!}c(\eta)^{m-\ell}Q^\ell r^*(\eta)^{m-\ell}\right)
        \\&=\frac{1}{n}\sum_{m=1}^n(1-q)^{2m}\sum_{k,\ell=0}^{m}\begin{pmatrix}m\\k
        \end{pmatrix}_q\begin{pmatrix}m\\\ell
        \end{pmatrix}_q\frac{[m]_q!^2}{[m-k]_q![m-\ell]_q!}c(\xi)^{m-k}Q^kc(\eta)^{m-\ell}r^*(\xi)^{m-k}Q^\ell r^*(\eta)^{m-\ell}\\
        &\hspace{4in} (\text{since }r^*(\xi)c(\eta)=c(\eta)r^*(\xi))\\
        &=\frac{1}{n}\sum_{m=1}^n(1-q)^{2m}\sum_{k,\ell=0}^{m}q^{k(m-\ell)+\ell(m-k)}\begin{pmatrix}m\\k
        \end{pmatrix}_q\begin{pmatrix}m\\\ell
        \end{pmatrix}_q\frac{[m]_q!^2}{[m-k]_q![m-\ell]_q!}c(\xi)^{m-k}c(\eta)^{m-\ell}Q^{k+\ell}r^*(\xi)^{m-k} r^*(\eta)^{m-\ell}\\
        &\hspace{1in} (\text{since }Q^kc(\eta)^{m-\ell}=q^{k(m-\ell)}c(\eta)^{m-\ell}Q^k \text{ and }r^*(\xi)^{m-k}Q^\ell=q^{(m-k)\ell}Q^\ell r^*(\xi)^{m-k}\text{ from }\eqref{eq:commutation between Q and c(xi) and r(xi)})\\
        &=\frac{1}{n}\sum_{m=1}^nd_m^2Q^{2m}+\frac{1}{n}\sum_{m=1}^n(1-q)^{2m}c(\xi)^mc(\eta)^mr^*(\xi)^mr^*(\eta)^m+X_n
\end{align*}
where the sum is divided over $(k,\ell=m)$, $(k,\ell=0)$ and the rest, i.e. $X_n$ is the operator
\begin{align*}
X_n=\frac{1}{n}\sum_{m=1}^n(1-q)^{2m}\sum_{\underset{0<k+\ell<2m}{k,\ell=0}}^{m}q^{m(k+\ell)-2k\ell}\begin{pmatrix}m\\k
        \end{pmatrix}_q\begin{pmatrix}m\\\ell
        \end{pmatrix}_q\frac{[m]_q!^2}{[m-k]_q![m-\ell]_q!}c(\xi)^{m-k}c(\eta)^{m-\ell}Q^{k+\ell}r^*(\xi)^{m-k} r^*(\eta)^{m-\ell}.
\end{align*}
Thus we have divided $R_n$ as a sum of three terms, and we will calculate norm limit of each term. To get the limit of the first term,  first verify   that $Q^{2m}$ converges to $P_\Omega$ as $m\to\infty$, and therefore the average $\frac{1}{n}\sum_{m=1}^nd_m^2Q^{2m}$ converges to $d_\infty^2P_{\Omega}$ in norm. The second term
\[\frac{1}{n}\sum_{m=1}^n(1-q)^{2m}c(\xi)^mc(\eta)^mr^*(\xi)^mr^*(\eta)^m\]
 converges to $0$ in norm by Proposition \ref{prop:limit of cesaro sum of C(xi)^kc(eta)^kT_k}. We finally show that $X_n$ converges to $0$ in norm, which will finish the proof.
 Indeed first note that $m(k+\ell)-2k\ell\geq m$ whenever $0<k+\ell<2m$ (one can observe this fact by  assuming without loss generality that $\ell\leq k$, and then considering various cases  such as $0\leq \ell\leq \frac{m}{2} \;\&\; 1\leq k\leq \frac{m}{2}$ or $\frac{m}{2}\leq \ell<m\;\&\; \frac{m}{2}<k\leq m$ etc); therefore we have $q^{m(k+\ell)-2k\ell}\leq |q|^m$. This observation along with Lemma \ref{lem:upper bound for c(xi^n)} and the fact that $\|Q\|\leq 1$ yield the norm estimate of $X_n$ as follows:
\begin{align*}
\|X_n\|&\leq \frac{C_q^2}{n}\sum_{m=1}^n|q|^{m}(1-q)^{2m}\sum_{\underset{0<k+\ell<2m}{k,\ell=0}}^{m}\frac{[m]_q!^2}{[k]_q![\ell]_q![m-k]_q![m-\ell]_q!}\frac{[m]_q!^2}{[m-k]_q![m-\ell]_q!}\left([m-k]_q![m-\ell]_q!\right)\\
&\leq \frac{C_q^2}{n}\sum_{m=1}^n|q|^m\sum_{{k,\ell=0}}^{m}\frac{d_m^4}{d_kd_\ell d_{m-k}d_{m-\ell}}\\
&\leq \frac{C_q^{10}}{n}\sum_{m=1}^n\sum_{{k,\ell=0}}^{m}|q|^m\\
&\leq \frac{C_q^{10}}{n}\sum_{m=1}^nm^2|q|^m.
\end{align*}
The sequence $\{m^2q^m\}_{m\geq 1}$ converges to $0$ as $m\to\infty$, and so does its average; hence we conclude that $X_n$ converges to $0$ in norm as $n\to\infty$. 
\end{proof}

We end this section by giving an expression for the right Wick product $W_r(\bar{e}^ne^n)$, which can easily be proved by utilizing Proposition \ref{prop:formula for right Wick product}.

\begin{cor}\label{cor:formula for W_rbar{e}^ne^n}
For $e,\bar{e}$ and $\lambda$ as before, the right Wick product $W_r(\Bar{e}^ne^n)$ is given as
    \begin{align*}
       W_r(\bar{e}^ne^n)&=\sum_{k,\ell=0}^nq^{(n)}_{k,\ell}r(e)^kr(\bar{e})^\ell r^*(\bar{e}^r)^{n-k}r^*(\overline{\bar{e}}^r)^{n-\ell}\\
    &=\sum_{k,\ell=0}^nq^{(n)}_{k,\ell}\lambda^{k-\ell}r(e)^kr(\bar{e})^\ell r^*(\bar{e})^{n-k}r^*(e)^{n-\ell}  
    \end{align*}
    where $q_{k,\ell}^{(m)}$ are the same scalars as in the proof of Lemma \ref{lem:boundedness of sequence S_n and R_n}.
\end{cor}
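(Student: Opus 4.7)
The plan is to apply Proposition \ref{prop:formula for right Wick product} directly to the vector $\bar{e}^n e^n = \xi_{2n} \otimes \cdots \otimes \xi_1$, declaring $\xi_j = e$ for $1 \le j \le n$ and $\xi_j = \bar{e}$ for $n+1 \le j \le 2n$. First I would invoke the second form of that proposition to write
\[
W_r(\bar{e}^n e^n) = \sum_{i=0}^{2n} \sum_{\sigma} q^{|\sigma|}\, r(\xi_{2n+1-\sigma(2n)}) \cdots r(\xi_{2n+1-\sigma(i+1)})\, r^*(\bar{\xi}^r_{2n+1-\sigma(i)}) \cdots r^*(\bar{\xi}^r_{2n+1-\sigma(1)}),
\]
and re-parametrize each $\sigma$ by two integers $k$ and $\ell$, where $k$ is the number of creation slots $j \in \{i+1, \ldots, 2n\}$ with $\sigma(j) \in \{n+1, \ldots, 2n\}$ (these contribute operators $r(e)$, since $\xi_{2n+1-\sigma(j)}=e$ in that case), and $\ell$ is the number of creation slots with $\sigma(j) \in \{1, \ldots, n\}$ (contributing $r(\bar{e})$). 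Because $\sigma$ is monotone on each block and $j \mapsto 2n+1-j$ reverses order, the creation word collapses to $r(e)^k r(\bar{e})^\ell$, and by the analogous count on the annihilation slots combined with the identifications $\bar{e}^r = \lambda^{-1}\bar{e}$ and $\overline{\bar{e}}^r = \lambda e$ from the paragraph preceding Lemma \ref{lem:boundedness of sequence S_n and R_n}, the annihilation word becomes $r^*(\bar{e}^r)^{n-k} r^*(\overline{\bar{e}}^r)^{n-\ell}$.

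The only nontrivial step is to identify the scalar coefficient of a given $(k, \ell)$-word with $q^{(n)}_{k,\ell}$. For this I would use the reflection $\rho(j) = 2n+1-j$, which bijects the creation value sets $C$ that arise on the right-hand side (characterized by $|C \cap \{n+1, \ldots, 2n\}| = k$ and $|C \cap \{1, \ldots, n\}| = \ell$) with the subsets $J = J_1 \cup J_2$, $|J_1| = k, |J_2| = \ell$, appearing in the definition of $q^{(n)}_{k,\ell}$ recalled in the proof of Lemma \ref{lem:boundedness of sequence S_n and R_n}. Reversing the order on $\{1, \ldots, 2n\}$ converts the inversion statistic $|\sigma|$ of the right Wick formula (i.e., the number of pairs in the annihilation set times the creation set with the annihilation entry larger) into the crossing statistic $c(J, J^c) = c(\rho(C), \rho(C)^c)$. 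Once this is verified, summing over the bijection yields $\sum_\sigma q^{|\sigma|} = q^{(n)}_{k,\ell}$, and the first displayed formula of the corollary follows.

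The second equality then follows immediately by substituting $r^*(\bar{e}^r) = \lambda^{-1} r^*(\bar{e})$ and $r^*(\overline{\bar{e}}^r) = \lambda r^*(e)$, which produces an overall factor of $(\lambda^{-1})^{n-k}\lambda^{n-\ell} = \lambda^{k-\ell}$. The main obstacle, though modest, is the reflection argument needed to match the right-Wick combinatorics to the left-Wick scalars $q^{(n)}_{k,\ell}$; the creation/annihilation bookkeeping and the substitution of the right conjugates are purely computational.
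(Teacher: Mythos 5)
Your argument is correct and is precisely the route the paper has in mind: the paper gives no details beyond noting that the corollary ``can easily be proved by utilizing Proposition~\ref{prop:formula for right Wick product},'' and your account of the creation/annihilation bookkeeping, the reflection $\rho(j)=2n+1-j$ matching the right-Wick coefficients with the scalars $q^{(n)}_{k,\ell}$ from the proof of Lemma~\ref{lem:boundedness of sequence S_n and R_n}, and the substitution $\bar{e}^r=\lambda^{-1}\bar{e}$, $\overline{\bar{e}}^r=\lambda e$ supplies exactly the omitted details.
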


\section{Main result}\label{sec:main result}
To establish the fullness of $\Gamma_q(\hr, U_t)$, we rely on the equivalent condition of the triviality of the relative commutant within the ultraproduct. We briefly recall the construction of the von Neumann ultraproduct. Let $\M$ be a von Neumann algebra equipped with a normal faithful state $\varphi$, and let $\omega$ be a free ultrafilter over the natural numbers. Denote by $\prod_{n\geq1}\M$ the algebra of all bounded sequences $(x_n)$ with each $x_n\in\M$. Let $I_\omega$ be the two sided ideal of $\prod_{n\geq1}\M$ given by
\begin{align*}
I_\omega=\left\{(x_n)\in\prod_{n\geq1}\M;\lim_{n\to\infty}\varphi(x_n^*x_n+x_nx_n^*)=0\right\}.
\end{align*}
The quotient $\M^\omega:=\prod_{n\geq1}\M/I_{\omega}$ is a well-defined von Neumann algebra, called ultraproduct of $\M$. A typical element in $\M^\omega$ will be denoted as $(x_n)_\omega$ which is the equivalence class of the sequence $(x_n)$ in the quotient.
 It is known  that $\M$ is a  full factor if and only if $\M' \cap \M^\omega = \mathbb{C}$ \cite{Con}. The proof of our main result on fullness uses this particular equivalent condition. In  what follows, one can also directly employ the equivalent condition of fullness as formulated in \cite[Corollary B]{Mar}.

\vspace{.1in}

\noindent
\textbf{\textit{Proof of Theorem 1.1.}}
If $\hr$ is finite-dimensional, then $\M_q$ is full by \cite[Theorem 2.7]{KSW}. If $\hr$ has a non-zero weakly mixing part or if the point spectrum $\sigma_p(A)$ of eigenvalues  of  the analytic generator $A$ of $\{U_t\}_{t\in\mathbb{R}}$ is infinite and bounded, then $\M_q$ is full by \cite[Theorem 6.2]{HI}. So
assume that $\hr$ is infinite-dimensional and there is a strictly decreasing  {sequence} of eigenvalues of $A$ converging to $0$.  

Now let $\omega$ be a fixed non-principal ultrafilter. Let $(X_k)_{\omega}\in \M_q'\cap\M_q^\omega$.
By replacing each $X_k$ by $X_k-\varphi(X_k)$, we may assume that $\varphi(X_k)=0$ for all $k\geq1$. In particular, we have $X_k\Omega, X_k^*\Omega\in \fqh\ominus\C\Omega$ for all $k\geq1$. We will show that $(X_k)_{\omega}=0$ in $\M_q^\omega$ i.e. $\lim_{k\to\omega}\|X_k\Omega\|_q=0=\lim_{k\to\omega}\|X_k^*\Omega\|_q$. 

Fix $\lambda\in \sigma_p(A)$ with $0<\lambda<1$, and  let $e\in \hc$ be such that $Ae=\lambda^{-1}e$ and $A\bar{e}=\lambda\bar{e}$ with $\|e\|_U=\lambda^{-\frac{1}{4}}$ and $\|\bar{e}\|_U=\lambda^{\frac{1}{4}}$. Consider the operator $S_n$ 
\begin{align*}
&S_n=\frac{1}{n}\sum_{m=1}^n(1-q)^{2m}W(\Bar{e}^me^m)c(\Bar{e})^mc(e)^m.
\end{align*}
Assume that $\lambda<(1+C_q^4)^{-2}$ so that the norm limit $S$ of $S_n$  exists and is invertible by Proposition \ref{prop:convergence of S_n=left wick formula}.
 We observe
\begin{align*}
    \lim_{k\to\omega}\|X_k\Omega\|_q^2&=\lim_{k\to\omega}\langle X_k\Omega, X_k\Omega\rangle_q=\lim_{k\to\omega}\lim_{n\to\infty}\langle X_k\Omega, S_nS^{-1}X_k\Omega\rangle_q=\lim_{n\to\infty}\lim_{k\to\omega}\langle X_k\Omega, S_nS^{-1}X_k\Omega\rangle_q
\end{align*}
where we could exchange the limit because $S_nS^{-1}$ converges in norm to identity as $n\to\infty$. Now we use the expressions $XX_k-X_kX\to0$ $*$-strongly as $k\to\omega$ for all $X\in \M_q$, and $X_kX'-X'X_k=0$ for all $X'\in \M_q'$ to get
\begin{align*}
    \lim_{k\to\omega}\|X_k\Omega\|_q^2&=\lim_{n\to\infty}\lim_{k\to\omega}\langle X_k\Omega, S_nS^{-1}X_k\Omega\rangle_q\\&=\lim_{n\to\infty}\lim_{k\to\omega}\frac{1}{n}\sum_{m=1}^n(1-q)^{2m}\langle X_k\Omega, W(\bar{e}^me^m)c(\bar{e})^mc(e)^mS^{-1}X_k\Omega\rangle_q\\
    &=\lim_{n\to\infty}\lim_{k\to\omega}\frac{1}{n}\sum_{m=1}^n(1-q)^{2m}\langle W(\bar{e}^me^m)X_k\Omega, c(\bar{e})^mc(e)^mS^{-1}X_k\Omega\rangle_q\\
    &=\lim_{n\to\infty}\lim_{k\to\omega}\frac{1}{n}\sum_{m=1}^n(1-q)^{2m}\langle X_kW(\bar{e}^me^m)\Omega, c(\bar{e})^mc(e)^mS^{-1}X_k\Omega\rangle_q\\
    &=\lim_{n\to\infty}\lim_{k\to\omega}\frac{1}{n}\sum_{m=1}^n(1-q)^{2m}\langle X_kW_r(\bar{e}^me^m)\Omega, c(\bar{e})^mc(e)^mS^{-1}X_n\Omega\rangle_q\\
    &=\lim_{n\to\infty}\lim_{k\to\omega}\frac{1}{n}\sum_{m=1}^n(1-q)^{2m}\langle X_k\Omega, W_r(\bar{e}^me^m)c(\bar{e})^mc(e)^mS^{-1}X_k\Omega\rangle_q\\
    &=\lim_{n\to\infty}\lim_{k\to\omega}\langle X_k\Omega,T_nS^{-1}X_k\Omega\rangle_q+\lim_{n\to\infty}\lim_{k\to\omega}\langle X_k\Omega,R_nS^{-1}X_k\Omega\rangle_q
\end{align*} 
where $T_n$ and $R_n$ are the following operators:
\begin{align*}
   & T_n=\frac{1}{n}\sum_{m=1}^n(1-q)^{2m}\left(W_r(\bar{e}^me^m)-r^*(\bar{e})^mr^*(e)^m-r(e)^mr(\Bar{e})^m\right)c(\bar{e})^mc(e)^m\\
    &R_n=\frac{1}{n}\sum_{m=1}^n(1-q)^{2m}(r^*(\bar{e})^mr^*(e)^m+r(e)^mr(\bar{e})^m)c(\bar{e})^mc(e)^m.
\end{align*}
Propositions \ref{prop:limit of cesaro sum of C(xi)^kc(eta)^kT_k} and \ref{prop: convergence of r^*(xi)r^*(eta)c(xi)c(eta)} tell us that the  operators $R_n$ (or $R_n^*$) when restricted to $\fqh\ominus\C\Omega$ converge to $0$ in norm as $n\to\infty$. Since $X_k\Omega\in \fqh\ominus\C\Omega$, it follows that
\begin{align*}
    \lim_{n\to\infty}\lim_{k\to\omega}\langle X_k\Omega,R_nS^{-1}X_k\Omega\rangle_q=0.
\end{align*}
Our next aim is to show that the operators $T_n$ are  bounded by $\frac{2C_q^6\sqrt{\lambda}}{1-\sqrt{\lambda}}$ as $n\to\infty$.
 First write using Corollary \ref{cor:formula for W_rbar{e}^ne^n} the following: 
\begin{align*}\label{eq:expression for right Wick formula}
    W_r(\Bar{e}^me^m)=\sum_{k,\ell=0}^mq^{(m)}_{k,\ell}\lambda^{k-\ell}r(e)^kr(\bar{e})^\ell r^*(\bar{e})^{m-k}r^*(e)^{m-\ell}. 
\end{align*}
 and then note that
 \begin{align*}
    W_r(\bar{e}^me^m)&-r^*(\bar{e})^mr^*(e)^m-r(e)^mr(\Bar{e})^m=\sum_{\ell=1}^m\sum_{k=0}^{m-1}q^{(m)}_{k,\ell}\lambda^{k-\ell}r(e)^kr(\bar{e})^\ell r^*(\bar{e})^{m-k}r^*(e)^{m-\ell}\\
    &+\sum_{\ell=0}^{m-1}q_{m,\ell}^{(m)}\lambda^{m-\ell}r(e)^m r(\bar{e})^\ell r^*(e)^{m-\ell}+\sum_{k=1}^{m}q_{k,0}^{(m)}\lambda^{k}r(e)^k  r^*(\bar{e})^{m-k}r^*(e)^{m}
    \end{align*}
so that the upper bound for $T_n$ can be given as follows:
\begin{align*}
\|T_n\|&\leq \frac{C_q^6}{n}\sum_{m=1}^n    \left(\sum_{\ell=1}^m\sum_{k=0}^{m-1}|q|^{(m-k)\ell}\lambda^{\frac{k-\ell}{2}}+\sum_{\ell=0}^{m-1}\lambda^{\frac{m-\ell}{2}}+\sum_{k=1}^{m}\lambda^{\frac{k}{2}}\right)
 \leq  {C_q^6}s_n+\frac{2C_q^6\sqrt{\lambda}}{1-\sqrt{\lambda}}.
\end{align*}
where $s_n$ is the sequence as in Lemma \ref{lem:convergence of the sclar sequence s_n}, which converges to $0$ by the same Lemma. Thus, we have shown that
\begin{align*}
\lim_{k\to\omega}\|X_k\Omega\|_q^2 &=\lim_{n\to\infty}\lim_{k\to\omega} \langle X_k\Omega, T_nS^{-1}X_k\Omega\rangle_q\\
&  \leq \left(\limsup_{n\to\infty}\|T_n\|\right)\;\|S^{-1}\|\;L\\
&\leq \frac{2LC_q^6\sqrt{\lambda}}{1-\sqrt{\lambda}} \cdot\frac{2C_q^2(1-\sqrt{\lambda})}{1-(1+C_q^4)\sqrt{\lambda}}
\end{align*}
where $L=\sup_{k\geq1}\|X_k\Omega\|^2$, and we have used the upper bound for $\|S^{-1}\|$ from Remark \ref{rem:estimate for the norm of S{-1}}.
By our assumption, we have a sequence of such $\lambda$'s which converges to $0$; hence it follows that $\lim_{k\to\omega}\|X_k\Omega\|_q=0$. A similar argument shows that $\lim_{k\to\omega}\|X_k^*\Omega\|_q=0$. This completes the proof.

\smallskip

\noindent {\bf Acknowledgments.}  We are grateful to Adam Skalski and Zhiyuan Yang  for going through the initial draft, and providing valuable suggestions; particularly, Zhiyuan suggested a shortened proof of Proposition \ref{prop: convergence of r^*(xi)r^*(eta)c(xi)c(eta)} by utilizing the operator $Q$. 
{We  thank the referee for their careful reading of the manuscript and for several useful comments.} 
This work was partially completed when M.K. was visiting S.W. at Harbin Institute of Technology in Aug-Sep 2023 for a month. M.K. extends appreciation to HIT for their warm hospitality and to S.W. for the invitation and financial support. 

M.K. was  partially supported by the National Science Center (NCN) grant no. 2020/39/I/ST1/01566. {The project is co-financed by the Polish National Agency for Academic Exchange within Polish Returns Program for Mateusz Wasilewski.} 
S.W. was partially supported by the NSF of China (No.12031004, No.12301161) and the Fundamental Research Funds for the Central Universities.  

\vspace{5 pt}
\includegraphics[scale=0.5]{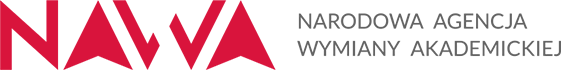}

\end{document}